\newcommand {\ctn}{\citet} 
\newcommand{\bzero}{\boldsymbol{0}}
\newtheorem{theorem}{Theorem}
\newtheorem{corollary}[theorem]{Corollary}
\newenvironment{proof}[1][Proof]{\textbf{#1.} }{\ \rule{0.5em}{0.5em}}
\numberwithin{equation}{section}
\numberwithin{algo}{section}
\numberwithin{table}{section}
\numberwithin{figure}{section}
\begin{document}


\title{\vspace{-0.8in}
On Asymptotics Related to Classical Inference in Stochastic Differential Equations with Random Effects}
\author{Trisha Maitra and Sourabh Bhattacharya\thanks{
Trisha Maitra is a PhD student and Sourabh Bhattacharya 
is an Associate Professor in Interdisciplinary Statistical Research Unit, Indian Statistical
Institute, 203, B. T. Road, Kolkata 700108.
Corresponding e-mail: sourabh@isical.ac.in.}}
\date{\vspace{-0.5in}}
\maketitle%

\begin{abstract}
\ctn{Maud12} considered $n$ independent stochastic differential equations ($SDE$'s), where
in each case the drift term is associated with a random effect, the distribution of which depends upon
unknown parameters. Assuming the independent and identical ($iid$) situation the authors provide independent proofs of
weak consistency and asymptotic normality of the maximum likelihood estimators ($MLE$'s) of the hyper-parameters
of their random effects parameters.

In this article, as an alternative route to proving consistency and asymptotic normality 
in the $SDE$ set-up involving random effects, we verify the regularity conditions required by existing relevant theorems.  
In particular, this approach allowed us to prove strong consistency under weaker assumption.
But much more importantly, we further consider the independent, but non-identical set-up associated 
with the random effects based
$SDE$ framework, and prove asymptotic results associated with the $MLE$'s.  
\\[2mm]
{\it {\bf Keywords:} Asymptotic normality; Burkholder-Davis-Gundy inequality; 
It\^{o} isometry; Maximum likelihood estimator; Random effects; 
Stochastic differential equations.}
 
\end{abstract}

\section{Introduction}
\label{sec:intro}

\ctn{Maud12} study mixed-effects stochastic differential equations ($SDE$'s) of the following form:
\begin{equation}
d X_i(t)=b(X_i(t),\phi_i)dt+\sigma(X_i(t))dW_i(t),\quad\mbox{with}\quad X_i(0)=x^i,~i=1,\ldots,n.
\label{eq:sde1}
\end{equation}
Here, for $i=1,\ldots,n$, the stochastic process $X_i(t)$ is assumed to be continuously observed on the time interval $[0,T_i]$
with $T_i>0$ known, and
$\{x^i;~i=1,\ldots,n\}$ are the known initial values of the $i$-th process. 
The processes $\{W_i(\cdot);~i=1,\ldots,n\}$ are independent standard Brownian motions, and 
$\{\phi_i;~i=1,\ldots,n\}$ are independently and identically distributed ($iid$) random variables with common distribution
$g(\varphi,\theta)d\nu(\varphi)$ (for all $\theta$, $g(\varphi,\theta)$ is a density with respect to a dominating measure
on $\mathbb R^d$, where $\mathbb R$ is the real line and $d$ is the dimension), which are independent
of the Brownian motions. Here $\theta\in\Omega\subset\mathbb R^p$ ($p\geq 2d$) is an unknown parameter to be estimated. 
The functions $b:\mathbb R\times\mathbb R^d\mapsto\mathbb R$ and $\sigma:\mathbb R\mapsto\mathbb R$ 
are the drift function and the diffusion coefficient, respectively,
both assumed to be known. 
\ctn{Maud12} impose regularity conditions that ensure existence of solutions of (\ref{eq:sde1}). We adopt their assumptions,
which are as follows.
\begin{itemize}
\item[(H1)]
\begin{enumerate}
\item[(i)] The function $(x,\varphi)\mapsto b(x,\varphi)$ is $C^1$ (differentiable with continuous first derivative)
on $\mathbb R\times\mathbb R^d$, and such that there exists $K>0$ so that $$b^2(x,\varphi)\leq K(1+x^2+|\varphi|^2),$$
for all $(x,\varphi)\in\mathbb R\times\mathbb R^d$.
\item[(ii)] The function $\sigma(\cdot)$ is $C^1$ on $\mathbb R$ and
$$\sigma^2(x)\leq K(1+x^2),$$
for all $x\in\mathbb R$.
\end{enumerate}
\item[(H2)] Let $X^{\varphi}_i$ be associated with the SDE of the form (\ref{eq:sde1}) with drift function $b(x,\varphi)$.
Also letting $Q^{x^i,T_i}_{\varphi}$ denote the joint distribution of $\left\{X^{\varphi}_i(t);~t\in [0,T_i]\right\}$, it is
assumed that for $i=1,\ldots,n$, and for all $\varphi,\varphi'$, the following holds:
\[ 
Q^{x^i,T_i}_{\varphi}\left(\int_0^{T_i}\frac{b^2\left(X^{\varphi}_i(t),\varphi'\right)}{\sigma^2(X^{\varphi}_i(t))}dt<\infty\right)
=1.
\]
\item[(H3)] For $f=\frac{\partial b}{\partial\varphi_j},~j=1,\ldots,d$, there exist $c>0$ and some $\gamma\geq 0$ such that
\[
\underset{\varphi\in\mathbb R^d}{\sup}\frac{\left|f(x,\varphi)\right|}{\sigma^2(x)}\leq c\left(1+|x|^{\gamma}\right).
\]
\end{itemize}

Statistically, the $i$-th process $X_i(\cdot)$ can be thought of as modelling the $i$-th individual 
and the corresponding random variable $\phi_i$ denotes the random effect of individual $i$. 
For statistical inference, we
follow \ctn{Maud12} who consider the special case where $b(x,\phi_i)=\phi_ib(x)$. We assume 
\begin{itemize}
\item[(H1$^\prime$)] 
\begin{enumerate}
\item[(i)] $b(\cdot)$ and
$\sigma(x)$ are $C^1$ on $\mathbb R$ 
satisfying $b^2(x)\leq K(1+x^2)$  
and $\sigma^2(x)\leq K(1+x^2)$
for all $x\in\mathbb R$, for some $K>0$. 
\item[(ii)] Almost surely for each $i\geq 1$, 
\[
\int_0^{T_i}\frac{b^2(X_i(s))}{\sigma^2(X_i(s))}ds<\infty.
\]
\end{enumerate}
\end{itemize}
Under this assumption, (H3) is no longer required; see \ctn{Maud12}. 
Moreover, Proposition 1 of \ctn{Maud12} holds; in particular, if for $k\geq 1$, $E|\phi_i|^{2k}<\infty$,
then for all $T>0$,
\begin{equation}
\underset{t\in[0,T]}{\sup}~E\left[X_i(t)\right]^{2k}<\infty.
\label{eq:finite_sup}
\end{equation}
As in \ctn{Maud12} we assume that $\phi_i$ are normally distributed. Hence, (\ref{eq:finite_sup}) is
satisfied in our case.
\ctn{Maud12} show that the likelihood, depending upon $\theta$, admits a relatively simple form composed of  
the following sufficient statistics:
\begin{align}
&U_i=\int_0^{T_i}\frac{b(X_i(s))}{\sigma^2(X_i(s))}dX_i(s),\quad V_i=\int_0^{T_i}\frac{b^2(X_i(s))}{\sigma^2(X_i(s))}ds,
\quad i=1,\ldots,n.
\label{eq:sufficient}
\end{align}  
%
%
The exact likelihood is given by
\begin{equation}
L(\theta)=\prod_{i=1}^n\lambda_i(X_i,\theta),
\label{eq:likelihood1}
\end{equation}
where
\begin{equation}
\lambda_i(X_i,\theta)=\int_{\mathbb R}g(\varphi,\theta)\exp\left(\varphi U_i-\frac{\varphi^2}{2}V_i\right)d\nu(\varphi).
\label{eq:likelihood2}
\end{equation}
Assuming that $g(\varphi,\theta)d\nu(\varphi)\equiv N\left(\mu,\omega^2\right)$, \ctn{Maud12} obtain the following form
of $\lambda_i(X_i,\theta)$:
\begin{equation}
\lambda_i(X_i,\theta)=\frac{1}{\left(1+\omega^2V_i\right)^{1/2}}\exp\left[-\frac{V_i}{2\left(1+\omega^2V_i\right)}
\left(\mu-\frac{U_i}{V_i}\right)^2\right]\exp\left(\frac{U^2_i}{2V_i}\right),
\label{eq:likelihood3}
\end{equation}
where $\theta=(\mu,\omega^2)\in\Omega\subset\mathbb R\times\mathbb R^+$. As in \ctn{Maud12}, here we assume that
\begin{itemize}
\item[(H2$^\prime$)] $\Omega$ is compact. 
\end{itemize}
%
%
\ctn{Maud12} consider $x^i=x$ and $T_i=T$ for $i=1,\ldots,n$, so that the set-up boils down to the $iid$ situation, and
investigate asymptotic properties of the $MLE$ of $\theta$, providing proofs of consistency and asymptotic
normality independently, without invoking the general results already existing in the literature. 
In this article, as an alternative, we prove asymptotic properties of the $MLE$ in this $SDE$ set-up
by verifying the regularity conditions of relevant theorems already existing in the literature.
Our approach allowed us to prove strong consistency of $MLE$, rather than weak consistency proved by \ctn{Maud12}.
Also, importantly, our approach does not require assumption (H4) of \ctn{Maud12} which required
$(U_1,V_1)$ to have density with respect to the Lebsegue measure on $\mathbb R\times\mathbb R^+$, which
must be jointly continuous and positive on an open ball of $\mathbb R\times\mathbb R^+)$.

Far more importantly, we consider 
the independent but non-identical case (we refer to the latter as non-$iid$), and prove consistency
and asymptotic normality of the $MLE$ in this set-up. 
In what follows, in Section \ref{sec:consistency_iid} we investigate asymptotic properties of $MLE$ 
in the $iid$ context. In Section \ref{sec:consistency_non_iid} we investigate classical 
asymptotics in the non-$iid$ set-up.
We summarize our work and provide concluding remarks in Section \ref{sec:conclusion}.

Notationally, ``$\stackrel{a.s.}{\rightarrow}$", ``$\stackrel{P}{\rightarrow}$" and ``$\stackrel{\mathcal L}{\rightarrow}$"
denote convergence ``almost surely", ``in probability" and ``in distribution", respectively.

\section{Consistency and asymptotic normality of $MLE$ in the $iid$ set-up}
\label{sec:consistency_iid}

\subsection{Strong consistency of $MLE$}
\label{subsec:MLE_consistency_iid}

Consistency of the $MLE$ under the $iid$ set-up can be verified by validating the regularity
conditions of the following theorem (Theorems 7.49 and 7.54 of \ctn{Schervish95}); for our purpose
we present the version for compact $\Omega$.
\begin{theorem}[\ctn{Schervish95}]
\label{theorem:theorem1}
Let $\{X_n\}_{n=1}^{\infty}$ be conditionally $iid$ given $\theta$ with density $f_1(x|\theta)$
with respect to a measure $\nu$ on a space $\left(\mathcal X^1,\mathcal B^1\right)$. Fix $\theta_0\in\Omega$, and define,
for each $M\subseteq\Omega$ and $x\in\mathcal X^1$,
\[
Z(M,x)=\inf_{\psi\in M}\log\frac{f_1(x|\theta_0)}{f_1(x|\psi)}.
\]
Assume that for each $\theta\neq\theta_0$, there is an open set $N_{\theta}$ such that $\theta\in N_{\theta}$ and
that $E_{\theta_0}Z(N_{\theta},X_i)> -\infty$. 
Also assume that $f_1(x|\cdot)$ is continuous at $\theta$ 
for every $\theta$, a.s. $[P_{\theta_0}]$. Then, if $\hat\theta_n$ is the $MLE$ of $\theta$ corresponding to $n$ observations, 
it holds that $\underset{n\rightarrow\infty}{\lim}~\hat\theta_n=\theta_0$, a.s. $[P_{\theta_0}]$.
\end{theorem}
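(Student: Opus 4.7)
The statement is the classical Wald-type consistency theorem under compactness of the parameter space, so my plan is to adapt Wald's argument. Let $M_n(\theta)=n^{-1}\sum_{i=1}^n\log\{f_1(X_i|\theta_0)/f_1(X_i|\theta)\}$. By the defining property of $\hat\theta_n$ we have $M_n(\hat\theta_n)\leq 0$, so it suffices to show that for every open neighborhood $N_{\theta_0}$ of $\theta_0$ the infimum of $M_n$ over the compact complement $K=\Omega\setminus N_{\theta_0}$ is eventually strictly positive almost surely; this forces $\hat\theta_n\in N_{\theta_0}$ eventually, and since $N_{\theta_0}$ is arbitrary, $\hat\theta_n\to\theta_0$ a.s.

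The crux, and the step I expect to be the main obstacle, is upgrading the hypothesis $E_{\theta_0}Z(N_\theta,X_1)>-\infty$ to strict positivity on a sufficiently small neighborhood. For each $\theta\in K$ (so $\theta\neq\theta_0$), I would pick a nested sequence $N_\theta\supseteq N_\theta^{(1)}\supseteq N_\theta^{(2)}\supseteq\cdots$ with $\cap_k N_\theta^{(k)}=\{\theta\}$. Then $k\mapsto Z(N_\theta^{(k)},x)$ is non-decreasing, and by the assumed continuity of $f_1(x|\cdot)$ at $\theta$ a.s. $[P_{\theta_0}]$ it converges pointwise to $\log\{f_1(x|\theta_0)/f_1(x|\theta)\}$. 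Applying monotone convergence to the non-negative differences $Z(N_\theta^{(k)},X_1)-Z(N_\theta,X_1)$ (the subtracted quantity being integrable from below by hypothesis), the expectations converge to the Kullback--Leibler divergence of $P_{\theta_0}$ from $P_\theta$, which is strictly positive by Jensen's inequality (the information inequality). Hence there exist $k(\theta)$ and $\delta_\theta>0$ with $E_{\theta_0}Z(N_\theta^{(k(\theta))},X_1)\geq\delta_\theta$.

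The remainder is routine compactness plus $SLLN$. The collection $\{N_\theta^{(k(\theta))}\}_{\theta\in K}$ open-covers the compact set $K$, so one extracts a finite subcover indexed by $\theta^{(1)},\ldots,\theta^{(m)}$ with corresponding positive lower bounds $\delta_1,\ldots,\delta_m$. Kolmogorov's strong law gives, for each $j$, that $n^{-1}\sum_{i=1}^n Z(N_{\theta^{(j)}}^{(k(\theta^{(j)}))},X_i)\to E_{\theta_0}Z(N_{\theta^{(j)}}^{(k(\theta^{(j)}))},X_1)\geq\delta_j$ almost surely. Since every $\theta\in K$ lies in some $N_{\theta^{(j)}}^{(k(\theta^{(j)}))}$ and $M_n(\theta)\geq n^{-1}\sum_i Z(N_{\theta^{(j)}}^{(k(\theta^{(j)}))},X_i)$ on that set, we obtain $\inf_{\theta\in K}M_n(\theta)\geq\tfrac12\min_j\delta_j>0$ for all sufficiently large $n$, almost surely. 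Combined with $M_n(\hat\theta_n)\leq 0$ this yields $\hat\theta_n\notin K$ eventually, completing the argument. Everything after the monotone-convergence upgrade is standard; that upgrade, which requires the continuity hypothesis to do real work, is the one point where some care is needed.
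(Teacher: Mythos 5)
This theorem is imported verbatim from \ctn{Schervish95} (Theorems 7.49 and 7.54); the paper supplies no proof of its own, only the citation, so the only meaningful comparison is with the standard argument, which is exactly what you give. Your Wald-type proof — the monotone-convergence upgrade of $E_{\theta_0}Z(N_\theta^{(k)},X_1)$ along shrinking neighborhoods to the strictly positive Kullback--Leibler divergence, the finite subcover of the compact set $\Omega\setminus N_{\theta_0}$, the strong law applied to each $Z(N_{\theta^{(j)}}^{(k(\theta^{(j)}))},X_i)$, and the inequality $M_n(\hat\theta_n)\leq 0$ — is correct and is essentially Schervish's own proof, the only caveat being the implicit identifiability assumption $P_\theta\neq P_{\theta_0}$ for $\theta\neq\theta_0$ (needed for strictness in the information inequality, and without which the theorem as stated would be false anyway).
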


\subsubsection{Verification of strong consistency of $MLE$ in our SDE set-up}
\label{subsubsec:MLE_consistency_iid}
To verify the conditions of Theorem \ref{theorem:theorem1} in our case, we note that for any $x$,
$f_1(x|\theta)=\lambda_1(x,\theta)=\lambda(x,\theta)$ given by (\ref{eq:likelihood3}), 
which is clearly continuous in $\theta$. Also, it follows from the proof of Proposition 7 of \ctn{Maud12} that
for every $\theta\neq\theta_0$,
\begin{align}
\log\frac{f_1(x|\theta_0)}{f_1(x|\theta)}
&=\frac{1}{2}\log\left(\frac{1+\omega^2V_1}{1+\omega^2_0V_1}\right)
+\frac{1}{2}\frac{(\omega^2_0-\omega^2)U^2_1}{(1+\omega^2V_1)(1+\omega^2_0V_1)}\notag\\
&\quad+\frac{\mu^2V_1}{2(1+\omega^2V_1)}-\frac{\mu U_1}{1+\omega^2V_1}
-\left(\frac{\mu^2_0V_1}{2(1+\omega^2_0V_1)}-\frac{\mu_0U_1}{1+\omega^2_0V_1}\right)\notag\\
&\geq -\frac{1}{2}\left\{\log\left(1+\frac{\omega^2}{\omega^2_0}\right)+\frac{|\omega^2-\omega^2_0|}{\omega^2}\right\}
-\frac{1}{2}|\omega^2_0-\omega^2|\left(\frac{U_1}{1+\omega^2_0V_1}\right)^2\left(1+\frac{\omega^2_0}{\omega^2}\right)\notag\\
&\quad -|\mu|\left\vert\frac{U_1}{1+\omega^2_0V_1}\right\vert\left(1+\frac{|\omega^2_0-\omega^2|}{\omega^2}\right)
-\left|\frac{\mu^2_0V_1}{2(1+\omega^2_0V_1)}\right|-\left|\frac{\mu_0U_1}{1+\omega^2_0V_1}\right|.
\label{eq:lower_bound1}
\end{align}
Taking $N_{\theta}=\left(\underline\mu,\overline\mu\right)\times \left(\underline\omega^2,\overline\omega^2\right)$, and
noting that $E_{\theta_0}\left(\frac{U_1}{1+\omega^2_0V_1}\right)^2$, 
$E_{\theta_0}\left\vert\frac{U_1}{1+\omega^2_0V_1}\right\vert$ and 
$E_{\theta_0}\left(\frac{\mu^2_0V_1}{2(1+\omega^2_0V_1)}\right)$
are finite due to Lemma 1 of \ctn{Maud12}, it follows that $E_{\theta_0}Z(N_{\theta},X_i)> -\infty$.
Hence, $\hat\theta_n\stackrel{a.s.}{\rightarrow}\theta_0$ $[P_{\theta_0}]$.
We summarize the result in the form of the following theorem:
\begin{theorem}
\label{theorem:consistency_iid}
Assume the $iid$ setup and conditions (H1$^\prime$) and (H2$^\prime$). 
Then the $MLE$ is strongly consistent
in the sense that
$\hat\theta_n\stackrel{a.s.}{\rightarrow}\theta_0$~ $[P_{\theta_0}]$.
\end{theorem}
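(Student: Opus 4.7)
The plan is to deduce Theorem 2 by verifying, in our SDE setting, the two hypotheses of Schervish's Theorem 1 stated above, applied to the marginal density $f_1(x|\theta)=\lambda(x,\theta)$ given by the explicit closed form of $\lambda_i$. There are two things to check: (a) continuity of $\theta\mapsto \lambda(X_1,\theta)$, and (b) existence, for each $\theta\neq\theta_0$, of an open neighborhood $N_\theta\ni\theta$ with $E_{\theta_0}Z(N_\theta,X_1)>-\infty$. Part (a) is immediate, since $(\mu,\omega^2)\mapsto\lambda(x,\theta)$ is an elementary composition of smooth functions and, because $V_1\geq 0$ and $\omega^2>0$, the denominators $1+\omega^2V_1$ stay bounded away from zero on $\Omega$.

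For part (b), I would take $N_\theta$ to be a rectangle $(\underline\mu,\overline\mu)\times(\underline\omega^2,\overline\omega^2)$ around $\theta$ with $\underline\omega^2>0$; this is legitimate because (H2$^\prime$) ensures that $\Omega\subset\mathbb R\times\mathbb R^+$ is compact. Expanding $\log\{f_1(X_1|\theta_0)/f_1(X_1|\psi)\}$ from the closed form of $\lambda$, and using only elementary inequalities such as $\log(1+t)\leq t$, $(1+\omega^2V_1)^{-1}\leq (1+\omega_0^2V_1)^{-1}(1+\omega_0^2/\omega^2)$, and the triangle inequality, one arrives at a lower bound that depends on $\psi=(\mu,\omega^2)$ only through deterministic quantities $|\mu|$, $|\omega^2-\omega_0^2|$, $\omega_0^2/\omega^2$ and $\log(1+\omega^2/\omega_0^2)$, all uniformly bounded on $N_\theta$, multiplied by fixed random functions of $(U_1,V_1)$ that depend only on $\theta_0$.

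The resulting lower bound is $P_{\theta_0}$-integrable provided
\[E_{\theta_0}\left(\frac{U_1}{1+\omega_0^2V_1}\right)^2,\qquad E_{\theta_0}\left|\frac{U_1}{1+\omega_0^2V_1}\right|,\qquad E_{\theta_0}\frac{V_1}{1+\omega_0^2V_1}\]
are all finite. At this stage I would invoke Lemma 1 of \ctn{Maud12}, which under (H1$^\prime$) is precisely designed to control quantities of this form, to conclude $E_{\theta_0}Z(N_\theta,X_1)>-\infty$. Theorem 1 then delivers $\hat\theta_n\stackrel{a.s.}{\rightarrow}\theta_0$ under $P_{\theta_0}$.

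The hard part is the uniform lower bound in (b): one must carefully separate the $\psi$-dependence into a deterministic prefactor (controlled by the compactness of $N_\theta$, with $\underline\omega^2>0$ being crucial) and a random factor (controlled by Lemma 1 of \ctn{Maud12}), and ensure that no term involving $1/\omega^2$ or $1/V_1$ survives which cannot be bounded uniformly over $\psi\in N_\theta$. Once this uniform bound is in place, the remainder of the argument is a direct appeal to Schervish's theorem.
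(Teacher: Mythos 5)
Your proposal is correct and follows essentially the same route as the paper: both verify Schervish's two hypotheses by taking $N_{\theta}=\left(\underline\mu,\overline\mu\right)\times\left(\underline\omega^2,\overline\omega^2\right)$, lower-bounding $\log\{f_1(X_1|\theta_0)/f_1(X_1|\psi)\}$ uniformly over $N_{\theta}$ by deterministic prefactors times fixed random functions of $U_1/(1+\omega^2_0V_1)$ and $V_1/(1+\omega^2_0V_1)$, and invoking Lemma 1 of \ctn{Maud12} for the finiteness of the relevant moments (the paper imports the explicit expansion and bound from the proof of Proposition 7 of \ctn{Maud12}, which is exactly the elementary-inequality computation you sketch).
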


\subsection{Asymptotic normality of $MLE$}
\label{subsec:MLE_iid_normality}

To verify asymptotic normality of $MLE$ we invoke the following theorem provided in \ctn{Schervish95} (Theorem 7.63):
\begin{theorem}[\ctn{Schervish95}]
\label{theorem:theorem2}
Let $\Omega$ be a subset of $\mathbb R^d$, and let $\{X_n\}_{n=1}^{\infty}$ be conditionally $iid$ given $\theta$
each with density $f_1(\cdot|\theta)$. Let $\hat\theta_n$ be an $MLE$. Assume that
$\hat\theta_n\stackrel{P}{\rightarrow}\theta$ under $P_{\theta}$ for all $\theta$. Assume that $f_1(x|\theta)$
has continuous second partial derivatives with respect to $\theta$ and that differentiation can be passed under the
integral sign. Assume that there exists $H_r(x,\theta)$ such that, for each $\theta_0\in int(\Omega)$ and each
$k,j$,
\begin{align}
\sup_{\|\theta-\theta_0\|\leq r}\left\vert\frac{\partial^2}{\partial\theta_k\partial\theta_j}\log f_{X_1|\Theta}(x|\theta_0)
-\frac{\partial^2}{\partial\theta_k\partial\theta_j}\log f_{X_1|\Theta}(x|\theta)\right\vert\leq H_r(x,\theta_0),
\label{eq:H1}
\end{align}
with
\begin{equation}
\lim_{r\rightarrow 0}E_{\theta_0}H_r\left(X,\theta_0\right)=0.
\label{eq:H2}
\end{equation}
Assume that the Fisher information matrix $\mathcal I(\theta)$ is finite and non-singular. Then, under $P_{\theta_0}$,
\begin{equation}
\sqrt{n}\left(\hat\theta_n-\theta_0\right)\stackrel{\mathcal L}{\rightarrow}N\left(\bzero,\mathcal I^{-1}(\theta_0)\right). 
\label{eq:MLE_normality_iid}
\end{equation}
\end{theorem}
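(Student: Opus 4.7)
The plan is to prove Theorem \ref{theorem:theorem2} by the standard score-equation/Taylor expansion route. Write $\ell_n(\theta)=\sum_{i=1}^n\log f_1(X_i|\theta)$, let $s_n(\theta)=\nabla_\theta\ell_n(\theta)$ be the score, and $H_n(\theta)=\nabla_\theta^2\ell_n(\theta)$ the observed information (with a minus sign). Since $\theta_0\in\text{int}(\Omega)$ and $\hat\theta_n\stackrel{P}{\rightarrow}\theta_0$, with probability tending to one $\hat\theta_n$ is an interior critical point, so $s_n(\hat\theta_n)=\bzero$. A coordinate-wise first-order Taylor expansion of each component of $s_n$ about $\theta_0$ then gives
\begin{equation*}
\bzero = s_n(\theta_0) + H_n(\theta^*_n)\,(\hat\theta_n-\theta_0),
\end{equation*}
where $\theta_n^*$ lies on the segment joining $\hat\theta_n$ and $\theta_0$ (with possibly different midpoints in each row, handled componentwise). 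The strategy is then to control the score via a CLT, the Hessian via an LLN together with the uniform-continuity bound \eqref{eq:H1}--\eqref{eq:H2}, and conclude via Slutsky.

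For the score, differentiating the identity $\int f_1(x|\theta)\,d\nu(x)=1$ twice under the integral sign (permitted by hypothesis) yields the standard information identities $E_{\theta_0}\!\left[\nabla\log f_1(X|\theta_0)\right]=\bzero$ and $E_{\theta_0}\!\left[\nabla^2\log f_1(X|\theta_0)\right]=-\mathcal{I}(\theta_0)$. Since the summands in $s_n(\theta_0)$ are iid with mean zero and finite, non-singular covariance $\mathcal{I}(\theta_0)$, the multivariate Lindeberg--L\'evy CLT gives $n^{-1/2}s_n(\theta_0)\stackrel{\mathcal{L}}{\rightarrow}N(\bzero,\mathcal{I}(\theta_0))$.

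The main technical obstacle is showing $n^{-1}H_n(\theta_n^*)\stackrel{P}{\rightarrow}-\mathcal{I}(\theta_0)$, which is where the hypothesis on $H_r$ is essential. The plan is a standard two-step argument: first, by the strong law, $n^{-1}H_n(\theta_0)\stackrel{a.s.}{\rightarrow}-\mathcal{I}(\theta_0)$ componentwise. Second, on the event $\{\|\hat\theta_n-\theta_0\|\leq r\}$, condition \eqref{eq:H1} gives the entrywise bound
\begin{equation*}
\left| n^{-1}H_{n,kj}(\theta_n^*) - n^{-1}H_{n,kj}(\theta_0)\right| \leq \frac{1}{n}\sum_{i=1}^n H_r(X_i,\theta_0),
\end{equation*}
whose right-hand side converges a.s.\ to $E_{\theta_0}H_r(X,\theta_0)$ by the SLLN. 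Given $\epsilon>0$, choose $r$ so small that $E_{\theta_0}H_r(X,\theta_0)<\epsilon/2$ by \eqref{eq:H2}, and then use the consistency $\hat\theta_n\stackrel{P}{\rightarrow}\theta_0$ to ensure $\{\|\hat\theta_n-\theta_0\|\leq r\}$ has probability at least $1-\epsilon$ for large $n$. This chains the three ingredients into $n^{-1}H_n(\theta_n^*)\stackrel{P}{\rightarrow}-\mathcal{I}(\theta_0)$.

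Finally, since $\mathcal{I}(\theta_0)$ is invertible, the continuous mapping theorem gives $[n^{-1}H_n(\theta_n^*)]^{-1}\stackrel{P}{\rightarrow}-\mathcal{I}^{-1}(\theta_0)$. Solving the Taylor identity yields $\sqrt{n}(\hat\theta_n-\theta_0) = -[n^{-1}H_n(\theta_n^*)]^{-1}\,n^{-1/2}s_n(\theta_0)$, and Slutsky's theorem combines this with the CLT to produce $\sqrt{n}(\hat\theta_n-\theta_0)\stackrel{\mathcal{L}}{\rightarrow}\mathcal{I}^{-1}(\theta_0)\cdot N(\bzero,\mathcal{I}(\theta_0))=N(\bzero,\mathcal{I}^{-1}(\theta_0))$, establishing \eqref{eq:MLE_normality_iid}. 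The only genuinely nonroutine piece is the Hessian step above; the CLT and Slutsky portions are entirely standard once the information identities have been justified via the interchange hypothesis.
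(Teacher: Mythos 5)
Your proof is correct, but note that the paper itself gives no proof of this statement: it is quoted verbatim as Theorem 7.63 of \ctn{Schervish95} and used as a black box, the paper's actual work being the verification of its hypotheses in the SDE setting. Your Taylor-expansion/score-CLT/Slutsky argument, with the $H_r$ condition controlling the Hessian between $\theta_0$ and the mean-value points, is essentially the standard proof found in Schervish's book, and the one subtlety you flag (row-wise midpoints $\theta_n^*$, giving a possibly non-symmetric matrix) is handled correctly since invertibility with probability tending to one follows from convergence to $-\mathcal I(\theta_0)$.
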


\subsubsection{Verification of the above regularity conditions for asymptotic normality in our SDE set-up}
\label{subsubsec:MLE_normality_iid}
In Section \ref{subsubsec:MLE_consistency_iid} we proved almost sure consistency of the $MLE$ $\hat\theta_n$
in the SDE set-up.
Hence, $\hat\theta_n\stackrel{P}{\rightarrow}\theta$ under $P_{\theta}$ for all $\theta$.
In the proof of Proposition 5, \ctn{Maud12} show that differentiation can be passed under the integral sign.
Letting $\gamma_i(\theta)=\frac{U_i-\mu V_i}{1+\omega^2V_i}$ and $I_i=\frac{V_i}{1+\omega^2V_i}$, 
note that (see the proof of Proposition 6 of \ctn{Maud12})
\begin{align}
& \frac{\partial^2}{\partial\mu^2}\log f_1(x|\theta) = -I_1(\omega^2),\quad 
\frac{\partial^2}{\partial\mu\partial\omega^2}\log f_1(x|\theta) = -\gamma_1(\theta)I_1(\omega^2);\label{eq:diff1}\\
&\frac{\partial^2}{\partial\omega^2\partial\omega^2}\log f_1(x|\theta) 
= -\frac{1}{2}\left(2\gamma^2_1(\theta)I_1(\omega^2)-I^2_1(\omega^2)\right).\label{eq:diff2}
\end{align}
It follows from (\ref{eq:diff1}) and (\ref{eq:diff2}) that in our case 
$\frac{\partial^2}{\partial\theta_k\partial\theta_j}\log f_1(x|\theta)$ is differentiable in 
$\theta=(\mu,\omega^2)$, and the derivative has finite expectation; see the proof of Proposition 8 of
\ctn{Maud12}). Hence, (\ref{eq:H1}) and (\ref{eq:H2}) clearly hold.
Following \ctn{Maud12} we assume:
\begin{itemize}
\item[(H3$^\prime$)] The true value $\theta_0\in int\left(\Omega\right)$.
\end{itemize}
That the information matrix $\mathcal I(\theta)$ is finite and is the covariance matrix
of the vector $\left(\gamma_1(\theta),\frac{1}{2}\left(\gamma^2_1(\theta)-I_1\left(\omega^2\right)\right)\right)$ 
(hence, nonnegative-definite), are shown
in \ctn{Maud12}. We additionally assume, as \ctn{Maud12}:
\begin{itemize}
\item[(H4$^\prime$)] The information matrix $\mathcal I(\theta_0)$ is invertible.
\end{itemize}
Hence, asymptotic normality of the $MLE$, of the form (\ref{eq:MLE_normality_iid}), holds in our case.
Formally,
\begin{theorem}
\label{theorem:asymp_normal_iid}
Assume the $iid$ setup and conditions (H1$^\prime$) -- (H4$^\prime$). 
Then the $MLE$ is asymptotically normally distributed as
(\ref{eq:MLE_normality_iid}).
\end{theorem}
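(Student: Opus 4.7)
The plan is to invoke Theorem \ref{theorem:theorem2} (the Schervish asymptotic normality result) with $d=2$, $\theta=(\mu,\omega^2)$ and $f_1(\cdot\mid\theta)=\lambda(\cdot,\theta)$ from (\ref{eq:likelihood3}), and simply verify its hypotheses one by one in the SDE set-up. The conditions to check are: (a) convergence in probability of $\hat\theta_n$ to the truth under $P_\theta$ for every $\theta$; (b) existence and continuity of the second partial derivatives of $f_1(x\mid\theta)$ in $\theta$, with differentiation allowed to be passed under the integral sign; (c) the local domination bounds (\ref{eq:H1}) and (\ref{eq:H2}); and (d) finiteness and non-singularity of the Fisher information at $\theta_0$.

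Condition (a) is essentially free: Theorem \ref{theorem:consistency_iid} gives $\hat\theta_n\stackrel{a.s.}{\to}\theta_0$ under $P_{\theta_0}$ and the argument is valid for every $\theta\in\Omega$, so convergence in probability is immediate. For (b), the explicit formulas (\ref{eq:diff1}) and (\ref{eq:diff2}) display the second partials of $\log f_1(x\mid\theta)$ as smooth functions of $\gamma_1(\theta)$ and $I_1(\omega^2)$, each of which is $C^1$ in $\theta=(\mu,\omega^2)$ on $\Omega$; in particular, these second partials are continuous. The interchange of differentiation and integration is exactly what is established in the proof of Proposition 5 of \ctn{Maud12}, so we simply cite that.

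The main obstacle, and the only step that requires real work, is condition (c). Here the plan is to note that by (\ref{eq:diff1})--(\ref{eq:diff2}) the second partials are themselves continuously differentiable in $\theta$, so a first-order Taylor expansion on the ball $\{\theta:\|\theta-\theta_0\|\leq r\}$ gives
\[
\left|\frac{\partial^2\log f_1(x\mid\theta_0)}{\partial\theta_k\partial\theta_j}-\frac{\partial^2\log f_1(x\mid\theta)}{\partial\theta_k\partial\theta_j}\right|\leq r\sup_{\|\theta'-\theta_0\|\leq r}\left\|\nabla_{\theta'}\frac{\partial^2\log f_1(x\mid\theta')}{\partial\theta_k\partial\theta_j}\right\|.
\]
Taking the right-hand side as $H_r(x,\theta_0)$, condition (\ref{eq:H1}) is automatic. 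Condition (\ref{eq:H2}) then reduces to showing that the supremum is integrable uniformly in a neighbourhood of $\theta_0$, so that dominated convergence drives $E_{\theta_0}H_r(X,\theta_0)\to 0$ as $r\to 0$. The supremum is a polynomial in $\gamma_1(\theta')$ and $I_1(\omega'^2)$, and since on a small enough ball one has $|\gamma_1(\theta')|\leq C(|U_1|+|V_1|)$ and $0\leq I_1(\omega'^2)\leq V_1$ by direct inspection, one obtains an integrable envelope from the moment bounds for $U_1$ and $V_1$ given by Lemma 1 of \ctn{Maud12} (this is essentially the content of the proof of Proposition 8 of \ctn{Maud12}, which we invoke).

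Finally, for (d), \ctn{Maud12} (proof of Proposition 6) identify $\mathcal I(\theta)$ as the covariance matrix of the vector $\bigl(\gamma_1(\theta),\tfrac12(\gamma_1^2(\theta)-I_1(\omega^2))\bigr)$, and Lemma 1 of \ctn{Maud12} together with (\ref{eq:finite_sup}) makes this covariance matrix finite; non-singularity at $\theta_0$ is exactly (H4$^\prime$). All hypotheses of Theorem \ref{theorem:theorem2} being verified, the conclusion (\ref{eq:MLE_normality_iid}) follows.
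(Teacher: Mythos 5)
Your proposal follows essentially the same route as the paper: both verify the hypotheses of Theorem \ref{theorem:theorem2} using the strong consistency already established, the interchange of differentiation and integration from Proposition 5 of \ctn{Maud12}, the explicit second partials (\ref{eq:diff1})--(\ref{eq:diff2}), and the moment bounds of Lemma 1 and Proposition 8 of \ctn{Maud12} together with (H3$^\prime$)--(H4$^\prime$). Your mean-value-theorem construction of $H_r(x,\theta_0)$ merely makes explicit the step the paper summarizes by saying the second partials are differentiable in $\theta$ with derivative of finite expectation, so (\ref{eq:H1}) and (\ref{eq:H2}) hold.
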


\section{Consistency and asymptotic normality of $MLE$ in the non-$iid$ set-up}
\label{sec:consistency_non_iid}

We now consider the case where the processes $X_i(\cdot);~i=1,\ldots,n$, are independently,
but not identically distributed. This happens when we no longer enforce the restrictions $T_i=T$
and $x^i=x$ for $i=1,\ldots,n$. However, we do assume that the sequences $\{T_1,T_2,\ldots\}$ and 
$\{x^1,x^2,\ldots,\}$ are sequences entirely contained in compact sets $\mathfrak T$ and $\mathfrak X$, respectively.
Due to compactness, there exist convergent subsequences with limits in $\mathfrak T$ and $\mathfrak X$.
Abusing notation, we continue to denote the convergent subsequences as  $\{T_1,T_2,\ldots\}$
and $\{x^1,x^2,\ldots\}$. Let the limts be $T^{\infty}\in \mathfrak T$ and $x^{\infty}\in \mathfrak X$.

Now, since the distributions of the processes $X_i(\cdot)$ are uniquely defined on
the space of real, continuous functions $\mathcal C\left([0,T_i]\mapsto\mathbb R\right)=
\left\{f:[0,T_i]\mapsto\mathbb R~\mbox{such that}~f~\mbox{is continuous}\right\}$,
given any $t\in [0,T_i]$, $f(t)$ is clearly a continuous function of the initial value $f(0)=x$.
To emphasize dependence on $x$, we denote the function as $f(t,x)$.
In fact, for any $\epsilon>0$, there exists $\delta_{\epsilon}>0$ 
such that whenever $|x_1-x_2|<\delta_{\epsilon}$,  $|f(t,x_1)-f(t,x_2)|<\epsilon$ for all $t\in [0,T_i]$.


Henceforth, we denote the process associated with the initial value $x$ and time point $t$ as $X(t,x)$, and by $\phi(x)$ the random effect
parameter associated with the initial value $x$ such that $\phi(x^i)=\phi_i$. We assume that 
\begin{itemize}
\item[(H5$^\prime$)] $\phi(x)$ is a real-valued, continuous function of $x$, and that 
for $k\geq 1$, 
\begin{equation}
\underset{x\in \mathfrak X}{\sup}~E\left[\phi(x)\right]^{2k}<\infty. 
\label{eq:phi_sup1}
\end{equation}
\end{itemize}

For $x\in \mathfrak X$ and $T\in \mathfrak T$, let
\begin{align}
U(x,T)&=\int_0^T\frac{b(X(s,x))}{\sigma^2(X(s,x))}d X(s,x)\label{eq:u_x_T};\\
V(x,T)&=\int_0^T\frac{b^2(X(s,x))}{\sigma^2(X(s,x))}ds.\label{eq:v_x_T}
\end{align}
Clearly, $U(x^i,T_i)=U_i$ and $V(x^i,T_i)=V_i$, where $U_i$ and $V_i$ are given by
(\ref{eq:sufficient}).
In this non-$iid$ set-up we assume that
\begin{itemize}
\item[(H6$^\prime$)]
\begin{equation}
\frac{b^2(x)}{\sigma^2(x)}<K(1+x^\tau),~\mbox{for some}~\tau\geq 1.
\label{eq:V_finite_moment2}
\end{equation}
\end{itemize}
This assumption ensures that moments of all orders of $V(x,T)$ are finite.
Then the moments of uniformly integrable continuous functions of $U(x,T)$, $V(x,T)$ and $\theta$ are continuous in 
$x$, $T$ and $\theta$.
The result is formalized as Theorem \ref{theorem:moment_continuity}, the proof of which is presented
in the Appendix.
\begin{theorem}
\label{theorem:moment_continuity}
Assume  (H5$^\prime$) and (H6$^\prime$). Let $h(u,v,\theta)$ be any continuous function of $u$, $v$ and $\theta$,
such that for any sequences $\left\{x_m\right\}_{m=1}^{\infty}$, 
$\left\{T_m\right\}_{m=1}^{\infty}$ and $\left\{\theta_m\right\}_{m=1}^{\infty}$, 
converging to $\tilde x$, $\tilde T$ and $\tilde\theta$, respectively, for any $\tilde x\in \mathfrak X$, 
$\tilde T\in \mathfrak T$ and $\tilde\theta\in\Omega$, 
the sequence $\left\{h\left(U(x_m,T_m),V(x_m,T_m),\theta_m\right)\right\}_{m=1}^{\infty}$
is uniformly integrable.
Then, as $m\rightarrow\infty$, 
\begin{equation}
E\left[h\left(U(x_m,T_m),V(x_m,T_m),\theta_m\right)\right]
\rightarrow E\left[h\left(U(\tilde x,\tilde T),V(\tilde x,\tilde T),\tilde\theta\right)\right].
\label{eq:moment1}
\end{equation}
\end{theorem}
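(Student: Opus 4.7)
The plan is to prove that $(U(x_m, T_m), V(x_m, T_m))$ converges in probability to $(U(\tilde x, \tilde T), V(\tilde x, \tilde T))$ under a common coupling of the diffusions, apply the continuous mapping theorem to transfer this to $h$, and then use the uniform integrability hypothesis to upgrade convergence in probability to convergence in $L^1$, which is exactly (\ref{eq:moment1}).

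The setup is to realize all $X(\cdot, x)$, $x \in \mathfrak X$, on one probability space driven by a single Brownian motion $W$ together with the continuous random field $\phi(\cdot)$ of (H5$^\prime$), as solutions of the SDE $dX(t,x) = \phi(x) b(X(t,x)) dt + \sigma(X(t,x)) dW(t)$ with $X(0,x)=x$. Continuity of the solution flow with respect to the initial condition, already observed in the paragraph preceding the theorem statement, then yields pathwise uniform convergence $\sup_{s \le T^*} |X(s, x_m) - X(s, \tilde x)| \to 0$ for any $T^* > \sup_m T_m \vee \tilde T$, while (H5$^\prime$) together with (\ref{eq:finite_sup}) supplies the uniform moment control $\sup_m \sup_{s \le T^*} E|X(s, x_m)|^{2k} < \infty$ for every $k \ge 1$.

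For $V(x_m, T_m)$ I split the difference with $V(\tilde x, \tilde T)$ into an interior piece $\int_0^{\tilde T}$ and a boundary piece $\int_{\tilde T}^{T_m}$. On both pieces the integrand is bounded by $K(1 + |X(s, x_m)|^\tau)$ thanks to (H6$^\prime$), and the uniform moment bound makes this an integrable envelope; dominated convergence (applied to the jointly continuous integrand $b^2/\sigma^2$ composed with the pathwise-convergent $X$) disposes of the interior piece, while $|T_m - \tilde T| \to 0$ handles the boundary piece. For $U(x_m, T_m)$ I use the decomposition implied by the SDE,
\[
U(x, T) = \phi(x) V(x, T) + \int_0^T \frac{b(X(s, x))}{\sigma(X(s, x))} dW(s),
\]
so that the first summand converges by the $V$ step and the continuity of $\phi$, while the second summand is attacked via It\^{o} isometry: the $L^2$ norm of its increment reduces to an expectation of $\int (b/\sigma)^2 ds$-type quantities, and since $(b(x)/\sigma(x))^2 = b^2(x)/\sigma^2(x) \le K(1 + |x|^\tau)$ by (H6$^\prime$), exactly the same dominated-convergence argument closes the step; the boundary increment $\int_{\tilde T \wedge T_m}^{\tilde T \vee T_m}$ has $L^2$ norm bounded by $|T_m - \tilde T|$ times the same envelope, hence also vanishes. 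Combining yields $(U(x_m, T_m), V(x_m, T_m)) \stackrel{P}{\rightarrow} (U(\tilde x, \tilde T), V(\tilde x, \tilde T))$, and since $\theta_m \to \tilde\theta$ is deterministic, joint continuity of $h$ gives $h(U(x_m, T_m), V(x_m, T_m), \theta_m) \stackrel{P}{\rightarrow} h(U(\tilde x, \tilde T), V(\tilde x, \tilde T), \tilde\theta)$, which the uniform integrability hypothesis then promotes to convergence in $L^1$.

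The main obstacle I anticipate is the stochastic-integral term in $U$: one has to check that the It\^{o} integrand $b/\sigma$, rather than $b$ and $\sigma$ separately, admits a square-integrable envelope along the sequence $x_m \to \tilde x$. The correct move is to avoid bounding $b$ and $\sigma$ independently (which would stumble on points where $\sigma$ is small) and instead exploit (H6$^\prime$) directly through $(b/\sigma)^2 = b^2/\sigma^2 \le K(1 + |x|^\tau)$, which, after composition with $X$ and pairing with (\ref{eq:finite_sup}), supplies the required dominating envelope and closes the It\^{o}-isometry step.
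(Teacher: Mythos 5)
Your proposal is correct and follows essentially the same route as the paper's own proof: the identical decomposition $U(x,T)=\phi(x)V(x,T)+\int_0^T\frac{b(X(s,x))}{\sigma(X(s,x))}\,dW(s)$, the interior/boundary splitting of the time integrals, the It\^{o}-isometry step with the envelope $b^2/\sigma^2\leq K(1+|x|^{\tau})$ from (H6$^\prime$), and the final upgrade from convergence of $(U,V)$ to convergence of expectations via continuity of $h$ and the assumed uniform integrability. The only cosmetic differences are that you phrase the intermediate convergence in probability under an explicit coupling (which the paper uses implicitly when it computes $E\left[U(x_m,T_m)-U(\tilde x,\tilde T)\right]^2$) and you work directly in $L^2$ where the paper additionally invokes the Burkholder--Davis--Gundy inequality for higher-moment uniform integrability.
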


\begin{corollary}
\label{corollary:corollary1}
As in \ctn{Maud12}, consider the function
\begin{align}
h(u,v)&=\exp\left(\psi\frac{u}{1+\xi v}\right)\label{eq:h2},
\end{align}
where $\psi\in\mathbb R$ and $\xi\in \mathbb R^+$.
Then, for any sequences $\left\{x_m\right\}_{m=1}^{\infty}$ and 
$\left\{T_m\right\}_{m=1}^{\infty}$ converging to $\tilde x$ and $\tilde T$, for any $\tilde x\in \mathfrak X$ and
$\tilde T\in \mathfrak T$, and for $k\geq 1$, 
\begin{equation}
E\left[h\left(U(x_m,T_m),V(x_m,T_m)\right)\right]^k
\rightarrow E\left[h\left(U(\tilde x,\tilde T),V(\tilde x,\tilde T)\right)\right]^k,
\label{eq:moment2}
\end{equation}
as $m\rightarrow\infty$.
\end{corollary}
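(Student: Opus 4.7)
The plan is to apply Theorem \ref{theorem:moment_continuity} to the continuous function $\tilde h(u,v)=h(u,v)^k=\exp\bigl(k\psi u/(1+\xi v)\bigr)$. Every hypothesis except uniform integrability is immediate, so the whole argument reduces to producing some $\epsilon>0$ with
$$\sup_{(x,T)\in\mathfrak X\times\mathfrak T}\,E\!\left[\exp\!\left(k\psi(1+\epsilon)\,\frac{U(x,T)}{1+\xi V(x,T)}\right)\right]<\infty,$$
since a uniform $L^{1+\epsilon}$-bound along any sequence implies the required uniform integrability. The starting algebraic identity is
$U(x,T)=\phi\,V(x,T)+M(x,T)$, obtained by substituting (\ref{eq:sde1}) into (\ref{eq:u_x_T}); here $M(x,T)=\int_0^T (b/\sigma)(X(s,x))\,dW(s)$ is a continuous local martingale with $\langle M\rangle_T=V(x,T)$. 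Consequently
$$\exp\!\left(\lambda\,\frac{U}{1+\xi V}\right)=\exp\!\left(\lambda\,\frac{\phi V}{1+\xi V}\right)\exp\!\left(\lambda\,\frac{M}{1+\xi V}\right),$$
and a Cauchy--Schwarz splitting reduces the question to bounding the exponential moments of the two factors separately.

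The $\phi$-factor is pointwise dominated by $\exp(|\lambda\phi|/\xi)$ via the elementary estimate $|\phi V/(1+\xi V)|\le|\phi|/\xi$, so its expectation is finite with a bound depending only on $\lambda,\xi,\mu,\omega^2$, and in particular independent of $(x,T)$, because $\phi\sim N(\mu,\omega^2)$. The substantive step is controlling $E\exp(\lambda M(x,T)/(1+\xi V(x,T)))$ uniformly in $(x,T)$. I will use a Chernoff argument built on the nonnegative supermartingale $\mathcal E^{\alpha}_T=\exp(\alpha M_T-\tfrac12\alpha^2 V_T)$, which satisfies $E\mathcal E^{\alpha}_T\le 1$ for every $\alpha\in\mathbb R$ (Fatou applied to the exponential local martingale). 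For $t>0$ and $0<\alpha\le 2t\xi$, the event $\{M_T\ge t(1+\xi V_T)\}$ is contained in $\{\alpha M_T-\tfrac12\alpha^2 V_T\ge \alpha t\}$, since $V_T\ge 0$ makes the residual coefficient $(\alpha t\xi-\tfrac12\alpha^2)V_T$ nonnegative. Choosing $\alpha=2t\xi$, which annihilates this coefficient exactly, and applying Markov together with the symmetric bound for $-M_T$ yields the universal sub-Gaussian tail
$$P\!\left(\frac{|M(x,T)|}{1+\xi V(x,T)}\ge t\right)\le 2\exp(-2\xi t^2),\qquad t\ge 0,$$
whose rate depends only on $\xi$ and not at all on $(x,T)$ or on the law of $V_T$. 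Integrating this tail gives $E\exp(\lambda|M|/(1+\xi V))\le C(\lambda,\xi)$, and the Cauchy--Schwarz recombination with the $\phi$-bound produces the required uniform exponential moment estimate.

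The one delicate point is the Chernoff step: because $V_T$ is random and correlated with $M_T$, a direct Markov inequality applied to $\{M_T\ge t(1+\xi V_T)\}$ leaves a residual $V_T$-term inside the exponent. The tuning of $\alpha$ to $t$ and $\xi$ so that the coefficient of $V_T$ vanishes is exactly what decouples the estimate from the (otherwise intractable) law of $V_T$ and yields a bound that is uniform over all $(x,T)\in\mathfrak X\times\mathfrak T$. Once this uniform exponential moment is in hand, Theorem \ref{theorem:moment_continuity} applied to $\tilde h$ delivers the stated convergence (\ref{eq:moment2}).
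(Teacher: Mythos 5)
Your proof is correct and follows the same route as the paper: reduce (\ref{eq:moment2}) to an application of Theorem \ref{theorem:moment_continuity} to $h^k(u,v)=\exp\left(k\psi u/(1+\xi v)\right)$ plus uniform integrability of the corresponding sequence. The paper disposes of the uniform integrability in a single line by citing the proof of Lemma 1 of \ctn{Maud12}, which rests on exactly the decomposition $U=\phi V+M$ and the exponential-supermartingale Chernoff argument you reconstruct; your only genuine addition is making explicit that the resulting sub-Gaussian tail for $|M|/(1+\xi V)$ has rate depending only on $\xi$ (and the $\phi$-factor only on $(\mu,\omega^2)$), hence the exponential-moment bound is uniform over $(x,T)\in\mathfrak X\times\mathfrak T$ --- which is the uniformity the corollary actually needs and which the paper's citation of an $iid$-setting lemma leaves implicit.
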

The proof of the above corollary only entails proving uniform integrability of 
$\left\{h\left(U(x_m,T_m),V(x_m,T_m)\right)\right\}_{m=1}^{\infty}$, which simply follows from the 
proof of Lemma 1 of \ctn{Maud12}.

Note that in our case, the Kullback-Leibler distance and Fisher's information are expectations of 
functions of the form $h(u,v,\theta)$, continuous in $u$, $v$ and $\theta$. Assumption (H6$^\prime$), 
the upper bounds provided
in \ctn{Maud12}, Corollary \ref{corollary:corollary1}, and compactness of $\Omega$, can be used to easily 
verify uniform integrability of the relevant sequences.
It follows that in our situation the Kullback-Leibler distance, which we now denote by $\mathcal K_{x,T}(\theta_0,\theta)$
(or $\mathcal K_{x,T}(\theta,\theta_0)$) to emphasize dependence on $x$, $T$ and $\theta$ are continuous in $\theta$, $x$
and $T$. Similarly, the elements of the Fisher's information matrix $\mathcal I_{x,T}(\theta)$ are continuous
in $\theta$, $x$ and $T$. For $x=x^k$ and $T=T_k$, we denote the Kullback-Leibler 
distance and the Fisher's information
as $\mathcal K_k(\theta_0,\theta)$ ($\mathcal K_k(\theta,\theta_0)$) and $\mathcal I_k(\theta)$, respectively.

Continuity of $\mathcal K_{x,T}(\theta_0,\theta)$ (or $\mathcal K_{x,T}(\theta,\theta_0)$) and $\mathcal I_{x,T}(\theta_0)$
with respect to $x$ and $T$ ensures that as $x^k\rightarrow x^{\infty}$ and $T_k\rightarrow T^{\infty}$,
$\mathcal K_{x^k,T_k}(\theta_0,\theta)\rightarrow \mathcal K_{x^{\infty},T^{\infty}}(\theta_0,\theta)=\mathcal K(\theta_0,\theta)$, say.
Similarly, $\mathcal K_{x^k,T_k}(\theta,\theta_0)\rightarrow \mathcal K(\theta,\theta_0)$ and 
$\mathcal I_{x^k,T_k}(\theta)\rightarrow \mathcal I_{x^{\infty},T^{\infty}}(\theta)=
\mathcal I(\theta)$, say. Since $X^{\infty}$ and $T^{\infty}$ are contained in the respective compact sets,
the limits $\mathcal K(\theta_0,\theta)$, $\mathcal K(\theta,\theta_0)$ and $\mathcal I(\theta)$ are well-defined
Kullback-Leibler divergences and Fisher's information, respectively.
From the above limits, it follow that for any $\theta\in\Omega$,
\begin{align}
\underset{n\rightarrow\infty}{\lim}~\frac{\sum_{k=1}^n\mathcal K_k(\theta_0,\theta)}{n}&=\mathcal K(\theta_0,\theta);
\label{eq:kl_limit_1}\\
\underset{n\rightarrow\infty}{\lim}~\frac{\sum_{k=1}^n\mathcal K_k(\theta,\theta_0)}{n}&=\mathcal K(\theta,\theta_0);
\label{eq:kl_limit_2}\\
\underset{n\rightarrow\infty}{\lim}~\frac{\sum_{k=1}^n\mathcal I_k(\theta)}{n}&=\mathcal I(\theta).
\label{eq:fisher_limit_1}
\end{align}
We investigate consistency and asymptotic normality of $MLE$ in our case using the results 
of \ctn{Hoadley71}. 
The limit results (\ref{eq:kl_limit_1}), (\ref{eq:kl_limit_2}) and (\ref{eq:fisher_limit_1}) will play
important roles in our proceedings.

\subsection{Consistency and asymptotic normality of $MLE$ in the non-$iid$ set-up}
\label{subsec:consistency_non_iid}

Following \ctn{Hoadley71} we define the following:
\begin{align}
R_i(\theta)&=\log\frac{f_i(X_i|\theta)}{f_i(X_i|\theta_0)}\quad\mbox{if}\ \ f_i(X_i|\theta_0)>0\notag\\
&=0 \quad\quad\quad\quad\quad\quad\mbox{otherwise}.
\label{eq:R1}
\end{align}

\begin{align}
R_i(\theta,\rho)&=\sup\left\{R_i(\xi):\|\xi-\theta\|\leq\rho\right\}\label{eq:R2}\\
{\mathcal V}_i(r)&=\sup\left\{R_i(\theta):\|\theta\|>r\right\}.\label{eq:V}
\end{align}
Following \ctn{Hoadley71} we denote by $r_i(\theta)$, $r_i(\theta,\rho)$ and $v_i(r)$
to be expectations of $R_i(\theta)$, $R_i(\theta,\rho)$ and ${\mathcal V}_i(r)$ under $\theta_0$; 
for any sequence $\{a_i;i=1,2,\ldots\}$ we denote
$\sum_{i=1}^na_i/n$ by $\bar a_n$.

\ctn{Hoadley71} proved that if the following regularity conditions are satisfied, then 
the MLE $\hat\theta_n\stackrel{P}{\rightarrow}\theta_0$:
\begin{itemize}
\item[(1)] $\Omega$ is a closed subset of $\mathbb R^d$.
\item[(2)] $f_i(X_i|\theta)$ is an upper semicontinuous 
function of $\theta$, uniformly in $i$, 
a.s. $[P_{\theta_0}]$.
\item[(3)] There exist $\rho^*=\rho^*(\theta)>0$, $r>0$ and $0<K^*<\infty$ for which 
\begin{enumerate}
\item[(i)] $E_{\theta_0}\left[R_i(\theta,\rho)\right]^2\leq K^*,\quad 0\leq\rho\leq\rho^*$;
\item[(ii)] $E_{\theta_0}\left[{\mathcal V}_i(r)\right]^2\leq K^*$.
\end{enumerate}
\item[(4)]
\begin{enumerate}
\item[(i)]$\underset{n\rightarrow\infty}{\lim}~\bar r_n(\theta)<0,\quad\theta\neq\theta_0$;
\item[(ii)]$\underset{n\rightarrow\infty}{\lim}~\bar v_n(r)<0$.
\end{enumerate}
\item[(5)] $R_i(\theta,\rho)$ and ${\mathcal V}_i(r)$ are measurable functions of $X_i$.
\end{itemize}
Actually, conditions (3) and (4) can be weakened but these are more easily applicable (see \ctn{Hoadley71} for details).

\subsubsection{Verification of the regularity conditions}
\label{subsubsec:consistency_non_iid}

Since $\Omega$ is compact in our case, the first regularity condition
clearly holds. 

For the second regularity condition, note that given $X_i$, 
$f_i(X_i|\theta)$ is continuous, in fact, uniformly continuous in $\theta$ in our case, 
since $\Omega$ is compact. Hence, for any given $\epsilon>0$, there exists $\delta_i(\epsilon)>0$, independent
of $\theta$,
such that $\|\theta_1-\theta_2\|<\delta_i(\epsilon)$ implies $\left|f(X_i|\theta_1)-f(X_i|\theta_2)\right|<\epsilon$.
Now consider a strictly positive function $\delta_{x,T}(\epsilon)$, continuous in $x\in\mathfrak X$ and $T\in\mathfrak T$,
such that $\delta_{x^i,T_i}(\epsilon)=\delta_i(\epsilon)$. Let 
$\delta(\epsilon)=\underset{x\in\mathfrak X,T\in\mathfrak T}{\inf}\delta_{x,T}(\epsilon)$. Since
$\mathfrak X$ and $\mathfrak T$ are compact, it follows that $\delta(\epsilon)>0$. Now it holds that
$\|\theta_1-\theta_2\|<\delta(\epsilon)$ implies $\left|f(X_i|\theta_1)-f(X_i|\theta_2)\right|<\epsilon$,
for all $i$. Hence, the second regularity condition is satisfied.

Let us now focus attention on condition (3)(i).
It follows from (\ref{eq:lower_bound1}) that
\begin{align}
R_i(\theta) &\leq \frac{1}{2}\left\{\log\left(1+\frac{\omega^2}{\omega^2_0}\right)+\frac{|\omega^2-\omega^2_0|}{\omega^2}\right\}
+\frac{1}{2}|\omega^2_0-\omega^2|\left(\frac{U_i}{1+\omega^2_0V_i}\right)^2\left(1+\frac{\omega^2_0}{\omega^2}\right)\notag\\
&\quad +|\mu|\left\vert\frac{U_i}{1+\omega^2_0V_i}\right\vert\left(1+\frac{|\omega^2_0-\omega^2|}{\omega^2}\right)
+\left(\frac{\mu^2_0V_i}{2(1+\omega^2_0V_i)}-\frac{\mu_0U_i}{1+\omega^2_0V_i}\right).
\label{eq:upper_bound1}
\end{align}
Let us denote $\left\{\xi\in\mathbb R\times\mathbb R^+:\|\xi-\theta\|\leq\rho\right\}$ by 
$S(\rho,\theta)$. Here $0<\rho<\rho^*(\theta)$, and $\rho^*(\theta)$ 
is so small that
$S(\rho,\theta)\subset\Omega$ for all $\rho\in (0,\rho^*(\theta))$. It then follows from (\ref{eq:upper_bound1}) that
\begin{align}
\underset{\xi\in S(\rho,\theta)}{\sup}~R_i(\xi)
&\leq \underset{(\mu,\omega^2)\in S(\rho,\theta)}{\sup}~\frac{1}{2}\left\{\log\left(1+\frac{\omega^2}{\omega^2_0}\right)
+\frac{|\omega^2-\omega^2_0|}{\omega^2}\right\}\notag\\
&\quad+\left(\frac{U_i}{1+\omega^2_0V_i}\right)^2\times
\underset{(\mu,\omega^2)\in S(\rho,\theta)}{\sup}~\left[\frac{1}{2}\left|\omega^2_0-\omega^2\right|
\left(1+\frac{\omega^2_0}{\omega^2}\right)\right]\notag\\
&\quad+\left\vert\frac{U_i}{1+\omega^2_0V_i}\right\vert\times\underset{(\mu,\omega^2)\in S(\rho,\theta)}{\sup}~
\left[|\mu|\left(1+\frac{|\omega^2_0-\omega^2|}{\omega^2}\right)\right]\notag\\
&\quad +\left|\frac{\mu^2_0V_i}{2(1+\omega^2_0V_i)}\right|+\left|\frac{\mu_0U_i}{1+\omega^2_0V_i}\right|.
\label{eq:sup_R1}
\end{align}
The supremums in (\ref{eq:sup_R1}) are finite due to compactness of $S(\rho,\theta)$. 
Since under $P_{\theta_0}$, $U_i/(1+\omega^2_0V_i)$ admits moments of all orders and 
$0<I_i(\omega^2_0)=\frac{V_i}{1+\omega^2_0V_i}<\frac{1}{\omega^2_0}$ (see \ctn{Maud12}), it follows from
(\ref{eq:sup_R1}) that
\begin{equation}
E_{\theta_0}\left[R_i(\theta,\rho)\right]^2\leq K_i(\theta),
\label{eq:upper_bound2}
\end{equation}
where $K_i(\theta)=K(x^i,T_i,\theta)$, with $K(x,T,\theta)$ being a continuous function of 
$(x,T,\theta)$, continuity being a consequence
of Theorem \ref{theorem:moment_continuity}. 
Since 
because of compactness of $\mathfrak X$, $\mathfrak T$ and $\Omega$,
$$K_i(\theta)\leq \underset{x\in\mathfrak X,T\in\mathfrak T,\theta\in\Omega}{\sup}~K(x,T,\theta)<\infty,$$
regularity condition (3)(i) follows.

To verify condition (3)(ii), first note that we can choose $r>0$ such that $\|\theta_0\|<r$ and
$\{\theta\in\Omega:\|\theta\|>r\}\neq\emptyset$. 
It then follows that 
$\underset{\left\{\theta\in\Omega:\|\theta\|>r\right\}}{\sup}~R_i(\theta)\leq \underset{\theta\in\Omega}{\sup}~R_i(\theta)$
for every $i\geq 1$. The right hand side is bounded by the same expression as the right hand side of (\ref{eq:sup_R1}),
with only $S(\theta,\rho)$ replaced with $\Omega$. 
The rest of the verification follows in the same way as verification of (3)(i).

To verify condition (4)(i) note that by (\ref{eq:kl_limit_1}) 
\begin{equation}
\underset{n\rightarrow\infty}{\lim}~\bar r_n=-\underset{n\rightarrow\infty}{\lim}~\frac{\sum_{i=1}^n\mathcal K_i(\theta_0,\theta)}{n}
=-\mathcal K(\theta_0,\theta)<0\quad\mbox{for}~\theta\neq\theta_0.
\label{eq:lim1}
\end{equation}
In other words, (4)(i) is satisfied. 

To verify (4)(ii) we first show that 
$\underset{n\rightarrow\infty}{\lim}~\bar v_n$ exists for suitably chosen $r>0$. Then we prove that the limit
is negative. To see that the limit exists, we first write $R_{x,T}(\theta)=-\mathcal K_{x,T}(\theta_0,\theta)$.
Clearly, $R_i(\theta)=R_{x^i,T_i}(\theta)$. 
Using the arguments provided in the course of verification of (3)(ii), and the moment existence result of \ctn{Maud12}, 
yield
\begin{equation}
\underset{i\geq 1}{\sup}~E_{\theta_0}\left[\underset{\left\{\theta\in\Omega:\|\theta\|>r\right\}}{\sup}~R_i(\theta)\right]^2
\leq
\underset{i\geq 1}{\sup}~E_{\theta_0}\left[\underset{\theta\in\Omega}{\sup}~R_i(\theta)\right]^2
\leq \underset{x\in \mathfrak X,T\in \mathfrak T}{\sup}~K_1(x,T),
\label{eq:upper_bound3}
\end{equation}
where $K_1(x,T)$ is a continuous function of $x$ and $T$. 
That $K_1(x,T)$
is continuous in $x$ and $T$ follows from Theorem \ref{theorem:moment_continuity}; the required uniform
integrability follows
due to finiteness of the moments of the random variable $U(x,T)/\left\{1+\omega^2V(x,T)\right\}$, 
for every $x\in\mathfrak X$ and $T\in\mathfrak T$, 
and compactness of $\mathfrak X$ and $\mathfrak T$.
Now, because of compactness of $\mathfrak X$ and $\mathfrak T$ it also follows that 
the right hand side of (\ref{eq:upper_bound3}) is finite, proving uniform integrability of 
$\left\{\underset{\left\{\theta\in\Omega:\|\theta\|>r\right\}}{\sup}~R_i(\theta)\right\}_{i=1}^{\infty}$. Hence, it follows 
from Theorem \ref{theorem:moment_continuity}
that $v_{x,T}=E_{\theta_0}\left[\underset{\left\{\theta\in\Omega:\|\theta\|>r\right\}}{\sup}~R_{x,T}(\theta)\right]$ 
is continuous in $x$ and $T$.
Since $x^i\rightarrow x^{\infty}$ and $T_i\rightarrow T^{\infty}$, 
$$\bar v_i=\bar v_{x^i,T_i}
\rightarrow \bar v_{x^{\infty},T^{\infty}}. 
$$
Since $\bar v_{x,T}$ is well-defined for every $x\in \mathfrak X$, $T\in \mathfrak T$, and since
$x^{\infty}\in \mathfrak X$, $T^{\infty}\in \mathfrak T$, $\bar v_{x^{\infty},T^{\infty}}$ is also well-defined. It follows that
\[
\underset{n\rightarrow\infty}{\lim}~\bar v_n = \bar v_{x^{\infty},T^{\infty}}
\]
exists.

To show that the limit $\underset{n\rightarrow\infty}{\lim}~\bar v_n$ is negative, 
let us first re-write ${\mathcal V}_i(r)$ as 
\begin{align}
{\mathcal V}_i(r)&=-\underset{\left\{\theta\in\Omega:\|\theta\|>r\right\}}{\inf}~
\left[\log\frac{f_i(X_i|\theta_0)}{f_i(X_i|\theta)}\right]\notag\\
&\leq -\underset{\left\{\theta\in\Omega:\|\theta\|\geq r\right\}}{\inf}~
\left[\log\frac{f_i(X_i|\theta_0)}{f_i(X_i|\theta)}\right]\notag\\
&=-\log\frac{f_i(X_i|\theta_0)}{f_i(X_i|\theta^*_i(X_i))},
\label{eq:vi_r}
\end{align}
for some $\theta^*_i(X_i)$, depending upon $X_i$, contained in $\Omega_r=\Omega\cap\left\{\theta:\|\theta\|\geq r\right\}$. 
Recall that we chose $r>0$ such that $\|\theta_0\|<r$ and 
$\Omega\cap\left\{\theta:\|\theta\|> r\right\}\neq\emptyset$, so that $\theta^*_i(X_i)\neq\theta_0$ as 
$\|\theta^*_i(X_i)\|\geq r>\|\theta_0\|$ for all $X_i$. 
It is important to observe that $\theta^*_i(X_i)$ can not be a one-to-one function of $X_i\equiv (U_i,V_i)$.
To see this, first observe that for any given constant $c$, the equation 
$\log f_i(X_i|\theta_0)-\log f_i(X_i|\theta)=c$, equivalently, the equation
 $\log f_i(U_i,V_i|\theta_0)-\log f_i(U_i,V_i|\theta)=c$,
admits infinite number of solutions 
in $(U_i,V_i)$, for any given $\theta=(\mu,\omega^2)$. Hence, for  
$\theta^*_i(X_i)=\varphi$ such that $\underset{\left\{\theta:\|\theta\|\geq r\right\}}{\inf}~
\left[\log\frac{f_i(X_i|\theta_0)}{f_i(X_i|\theta)}\right]=\log\frac{f_i(X_i|\theta_0)}{f_i(X_i|\varphi)}=c$, 
there exist infinitely many values of $(U_i,V_i)$ with the same infimum $c$ for the same value $\varphi$, thereby
proving that $\theta_i^*(X_i)$
is a many-to-one function of $X_i$. A consequence of this is non-degeneracy of the conditional distribution
of $X_i$, given $\theta_i^*(X_i)$, which ensures that 
$E_{X_i|\theta^*_i(X_i),\theta_0}\left[\log\frac{f_i(X_i|\theta_0)}{f_i(X_i|\theta^*_i(X_i))}\right]
=\mathcal K_i(\theta_0,\theta^*_i(X_i))$
is well-defined and strictly positive, since $\theta^*_i(X_i)\neq\theta_0$.

Given the above arguments, now note that,
\begin{align}
E_{\theta_0}\left[\log\frac{f_i(X_i|\theta_0)}{f_i(X_i|\theta^*_i(X_i))}\right]
&=E_{\theta^*_i(X_i)|\theta_0}E_{X_i|\theta^*_i(X_i)=\varphi_i,\theta_0}
\left[\log\frac{f_i(X_i|\theta_0)}{f_i(X_i|\theta^*_i(X_i)=\varphi_i)}\right]\notag\\
&=E_{\theta^*_i(X_i)|\theta_0}\left[\mathcal K_i(\theta_0,\varphi_i)\right]\notag\\
&\geq E_{\theta^*_i(X_i)|\theta_0}\left[\underset{\varphi_i\in \Omega_r}{\inf}~\mathcal K_i(\theta_0,\varphi_i)\right]\notag\\
&=E_{\theta^*_i(X_i)|\theta_0}\left[\mathcal K_i(\theta_0,\varphi^*_i)\right]\notag\\
&=\mathcal K_i(\theta_0,\varphi^*_i),
\label{eq:kl_new1}
\end{align}
where $\varphi^*_i\in\Omega_r$ is where the infimum of $\mathcal K_i(\theta_0,\varphi_i)$ is achieved.
Since $\varphi^*_i$ is independent of $X_i$,  
the last step (\ref{eq:kl_new1}) follows.
Hence,
\begin{align}
E_{\theta_0}{\mathcal V}_i(r)&\leq -\mathcal K_i(\theta_0,\varphi^*_i)
\leq -\underset{x\in \mathfrak X,T\in \mathfrak T,\theta\in\Omega_r}{\inf}~\mathcal K_{x,T}(\theta_0,\varphi)
=-\mathcal K_{x^*,T^*}(\theta_0,\varphi^*),
\label{eq:kl_star}
\end{align}
for some $x^*\in \mathfrak X$, $T^*\in \mathfrak T$ and $\varphi^*\in\Omega_r$. Since $\mathcal K_{x^*,T^*}(\theta_0,\varphi^*)$
is a well-defined Kullback-Leibler distance, it is strictly positive since $\varphi^*\neq\theta_0$.
%
%
Hence, it follows from (\ref{eq:kl_star}) and the fact that $\mathcal K_{x^*,T^*}(\theta_0,\varphi^*)>0$, that
\begin{align}
\underset{n\rightarrow\infty}{\lim}~\bar v_n
&=\underset{n\rightarrow\infty}{\lim}~\frac{\sum_{i=1}^n E_{\theta_0}{\mathcal V}_i(r)}{n}
\notag\\
&\leq -\underset{n\rightarrow\infty}{\lim}~\frac{\sum_{i=1}^n \mathcal K_i(\theta_0,\varphi^*_i)}{n}\notag\\
&\leq -\mathcal K_{x^*,T^*}(\theta_0,\varphi^*)\notag\\
&<0.\notag
\end{align}
Thus, condition (4)(ii) holds.

Regularity condition (5) holds because for any $\theta\in\Omega$, $R_i(\theta)$ is an almost surely
continuous function of $X_i$ rendering it measurable for all
$\theta\in\Omega$, and due to the fact that supremums of measurable functions
are measurable.

In other words, in the non-$iid$ set-up in the non-$iid$ SDE framework, the following
theorem holds:
\begin{theorem}
\label{theorem:consistency_non_iid}
Assume the non-$iid$ SDE setup and conditions (H1$^\prime$) (i) and (H2$^\prime$) -- (H6$^\prime$). 
Then it holds that $\hat\theta_n\stackrel{P}{\rightarrow}\theta_0$.
\end{theorem}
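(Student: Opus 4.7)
The plan is to invoke Hoadley's (1971) asymptotic consistency theorem for independent, non-identically distributed observations and verify, one by one, the five regularity conditions (1)--(5) preceding the theorem. The heavy lifting has essentially been prepared in Theorem \ref{theorem:moment_continuity}, Corollary \ref{corollary:corollary1} and the limit identities (\ref{eq:kl_limit_1})--(\ref{eq:fisher_limit_1}), so most of the verifications reduce to combining those tools with the compactness of $\Omega$, $\mathfrak{X}$ and $\mathfrak{T}$.

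For the three easier conditions: compactness of $\Omega$ via (H2$^\prime$) gives closedness, so (1) is immediate. For (2), continuity of $\lambda_i(X_i,\theta)$ in $\theta$ combined with compactness of $\Omega$ yields uniform continuity on $\Omega$ for each fixed $i$; uniformity across $i$ is obtained by viewing the modulus $\delta_i(\epsilon)$ as the restriction of a continuous positive function $\delta_{x,T}(\epsilon)$ to $(x^i,T_i)$ and taking its infimum over the compact set $\mathfrak{X}\times\mathfrak{T}$. Condition (5) follows from almost-sure continuity of $R_i(\theta)$ in $\theta$ and measurability of pointwise suprema of continuous functions over a separable parameter set. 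For condition (3), I would start from the upper bound (\ref{eq:upper_bound1}) on $R_i(\theta)$, pass to the supremum over $S(\rho,\theta)$ (or over $\Omega$, for part (ii)), and observe that the deterministic suprema are finite by compactness. Squaring, taking expectation, and using that $U_i/(1+\omega_0^2V_i)$ has all moments finite and $V_i/(1+\omega_0^2V_i)<1/\omega_0^2$ (Lemma 1 of \ctn{Maud12}), I would obtain $E_{\theta_0}[R_i(\theta,\rho)]^2\leq K(x^i,T_i,\theta)$, where $K$ is continuous on the compact product $\mathfrak{X}\times\mathfrak{T}\times\Omega$ by Theorem \ref{theorem:moment_continuity}, hence uniformly bounded.

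Condition (4)(i) is the easiest: the limit identity (\ref{eq:kl_limit_1}) directly yields $\lim_n \bar r_n = -\mathcal{K}(\theta_0,\theta) < 0$ for $\theta \neq \theta_0$, since a nontrivial Kullback-Leibler divergence is strictly positive. Condition (4)(ii) is the step I expect to be the main obstacle, and I would handle it in two stages. First, choose $r$ with $\|\theta_0\|<r$ and $\Omega\cap\{\|\theta\|>r\}\neq\emptyset$, and establish existence of $\lim_n \bar v_n$ by showing that $(x,T)\mapsto v_{x,T}=E_{\theta_0}[\sup_{\|\theta\|>r}R_{x,T}(\theta)]$ is continuous on the compact set $\mathfrak{X}\times\mathfrak{T}$. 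This uses Theorem \ref{theorem:moment_continuity} together with uniform integrability of $\{\sup_{\|\theta\|>r}R_i(\theta)\}$, itself a consequence of the squared-expectation bound in (\ref{eq:upper_bound3}); convergence $(x^i,T_i)\to(x^\infty,T^\infty)$ then gives $\bar v_n\to v_{x^\infty,T^\infty}$.

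Second, to show strict negativity of the limit, I would rewrite $\mathcal{V}_i(r)$ as the negative of an infimum of $\log[f_i(X_i|\theta_0)/f_i(X_i|\theta)]$ over $\Omega_r=\Omega\cap\{\|\theta\|\geq r\}$, attained at some $\theta^*_i(X_i)\in\Omega_r$ with $\theta^*_i(X_i)\neq\theta_0$. The delicate point is that $X_i\mapsto\theta^*_i(X_i)$ is necessarily many-to-one, since for a fixed $\varphi\in\Omega_r$ the level sets $\{(U_i,V_i):\log f_i(U_i,V_i|\theta_0)-\log f_i(U_i,V_i|\varphi)=c\}$ are nontrivial; this non-degeneracy lets me apply the tower property to evaluate $E_{X_i|\theta^*_i(X_i)=\varphi_i,\theta_0}[\log\{f_i(X_i|\theta_0)/f_i(X_i|\varphi_i)\}]=\mathcal{K}_i(\theta_0,\varphi_i)$. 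Bounding this from below by $\inf_{\varphi\in\Omega_r}\mathcal{K}_i(\theta_0,\varphi)=\mathcal{K}_i(\theta_0,\varphi^*_i)$ and then, by compactness of $\mathfrak{X}\times\mathfrak{T}\times\Omega_r$ together with continuity of $\mathcal{K}_{x,T}(\theta_0,\cdot)$, by $\mathcal{K}_{x^*,T^*}(\theta_0,\varphi^*)>0$ at a global minimiser, I obtain $E_{\theta_0}\mathcal{V}_i(r)\leq -\mathcal{K}_{x^*,T^*}(\theta_0,\varphi^*)$; taking the Cesàro limit yields $\lim_n\bar v_n<0$ as required. Finally, assembling all five verified conditions and applying Hoadley's theorem gives $\hat\theta_n\stackrel{P}{\rightarrow}\theta_0$.
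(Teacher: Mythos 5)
Your proposal is correct and follows essentially the same route as the paper: verification of Hoadley's five conditions using Theorem \ref{theorem:moment_continuity}, the limit identities (\ref{eq:kl_limit_1})--(\ref{eq:fisher_limit_1}), compactness of $\Omega$, $\mathfrak X$ and $\mathfrak T$, and the two-stage treatment of condition (4)(ii) including the many-to-one property of $\theta^*_i(X_i)$ and the tower-property lower bound by a strictly positive Kullback--Leibler divergence. No substantive differences from the paper's argument.
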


\subsection{Asymptotic normality of $MLE$ in the non-$iid$ set-up}
\label{subsec:normality_non_iid}

Let $\zeta_i(x,\theta)=\log f_i(x|\theta)$; also, let $\zeta'_i(x,\theta)$ be the $d\times 1$ vector
with $j$-th component $\zeta'_{i,j}(x,\theta)=\frac{\partial}{\partial\theta_j}\zeta_i(x,\theta)$, and
let $\zeta''_i(x,\theta)$ be the $d\times d$ matrix with $(j,k)$-th element
$\zeta''_{i,jk}(x,\theta)=\frac{\partial^2}{\partial\theta_j\partial\theta_k}\zeta_i(x,\theta)$.

For proving asymptotic normality in the non-$iid$ framework, \ctn{Hoadley71} assumed the
following regularity conditions:
\begin{itemize}
\item[(1)] $\Omega$ is an open subset of $\mathcal R^d$.
\item[(2)] $\hat\theta_n\stackrel{P}{\rightarrow}\theta_0$.
\item[(3)] $\zeta'_i(X_i,\theta)$ and $\zeta''_i(X_i,\theta)$ exist a.s. $[P_{\theta_0}]$.
\item[(4)] $\zeta''_i(X_i,\theta)$ is a continuous function of $\theta$, uniformly in $i$, a.s. $[P_{\theta_0}]$,
and is a measurable function of $X_i$.
\item[(5)] $E_{\theta}[\zeta'_i(X_i,\theta)]=0$ for $i=1,2,\ldots$.
\item[(6)] $\mathcal I_i(\theta)=E_{\theta}\left[\zeta'_i(X_i,\theta)\zeta'_i(X_i,\theta)^T\right]
=-E_{\theta}\left[\zeta''_i(X_i,\theta)\right]$, where for any vector $y$, $y^T$ denotes
the transpose of $y$.
\item[(7)] $\bar{\mathcal I}_n(\theta)\rightarrow\bar{\mathcal I}(\theta)$ as $n\rightarrow\infty$ and 
$\bar{\mathcal I}(\theta)$ is positive definite.
\item[(8)] $E_{\theta_0}\left\vert\zeta'_{i,j}(X_i,\theta_0)\right\vert^3\leq K_2$, for some $0<K_2<\infty$.
\item[(9)] There exist $\epsilon>0$ and random variables $B_{i,jk}(X_i)$ such that
\begin{enumerate}
\item[(i)] $\sup\left\{\left\vert\zeta''_{i,jk}(X_i,\xi)\right\vert:\|\xi-\theta_0\|\leq\epsilon\right\}
\leq B_{i,jk}(X_i)$.
\item[(ii)] $E_{\theta_0}\left\vert B_{i,jk}(X_i)\right\vert^{1+\delta}\leq K_2$, for some $\delta>0$.
\end{enumerate}
\end{itemize}
Condition (8) can be weakened but is relatively easy to handle.
Under the above regularity conditions, \ctn{Hoadley71} prove that 
\begin{equation}
\sqrt{n}\left(\hat\theta_n-\theta_0\right)\stackrel{\mathcal L}{\rightarrow}
N\left(\bzero,\bar{\mathcal I}^{-1}(\theta_0)\right).
\label{eq:MLE_normality_non_iid}
\end{equation}

\subsubsection{Validation of asymptotic normality of $MLE$ in the non-$iid$ SDE set-up}
\label{subsubsec:normality_non_iid}

Note that condition (1) requires the parameter space $\Omega$ to be an open subset. However,
the proof of asymptotic normality presented in \ctn{Hoadley71} continues to hold for compact $\Omega$,
since for any open cover of $\Omega$ we can extract a finite subcover, consisting of open sets. 

Conditions (2), (3), (5), (6) are clearly valid in our case.
Condition (4) can be verified in exactly the same way as condition (2) of Section \ref{subsec:consistency_non_iid}
is verified; measurability of $\zeta''_i(X_i,\theta)$ follows due its continuity with respect to $X_i$.
%
%
Condition (7) simply follows from (\ref{eq:fisher_limit_1}) and
condition (8) holds due to finiteness of the moments of the random variable $U(x,T)/\left\{1+\omega^2V(x,T)\right\}$, 
 for every $x\in\mathfrak X$, $T\in\mathfrak T$,
and compactness of $\mathfrak X$ and $\mathfrak T$.

For conditions (9)(i) and (9)(ii) note that $\zeta''_{i,jk}(X_i,\theta)$ for 
$j,k=1,2$ are given by $\frac{\partial^2}{\partial\mu^2}\log f(X_i|\theta)=-I_i(\omega^2)$,
$\frac{\partial^2}{\partial\mu\partial\omega^2}\log f(X_i|\theta)=-\gamma_i(\theta)I_i(\omega^2)$,
and $\frac{\partial^2}{\partial\omega^2\partial\omega^2}\log f(X_i|\theta)
=-\frac{1}{2}\left(2\gamma^2_i(\theta)I_i(\omega^2)-I^2_i(\omega^2)\right)$. 
Also since \ctn{Maud12} establish
\begin{equation}
\underset{\theta\in\Omega}{\sup}~|\gamma_i(\theta)|\leq\left\vert\frac{U_i}{1+\omega^2_0V_i}\right|
\left(2+\frac{\omega^2_0}{\underline\omega^2}\right)+\frac{|\bar\mu |}{\underline\omega^2},
\label{eq:cond1_3}
\end{equation}
it follows from
(\ref{eq:cond1_3}), the fact that $0<I_i(\omega^2)<1/\omega^2$, 
finiteness of moments of all orders of the previously mentioned derivatives for every $x\in\mathfrak X$, $T\in\mathfrak T$, 
and compactness of $\mathfrak X$ and $\mathfrak T$,
that conditions (9)(i) and (9)(ii) hold.

In other words, in our non-$iid$ SDE case we have the following theorem on asymptotic normality.
\begin{theorem}
\label{theorem:asymp_normal_non_iid}
Assume the non-$iid$ SDE setup and conditions (H1$^\prime$) (i) and (H2$^\prime$) -- (H6$^\prime$). 
Then (\ref{eq:MLE_normality_non_iid}) holds. 
\end{theorem}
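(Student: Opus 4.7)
The plan is to verify the nine regularity conditions (1)--(9) of \ctn{Hoadley71} listed in Section \ref{subsec:normality_non_iid}, since all the necessary machinery (moment bounds, continuity results, and consistency of $\hat\theta_n$) has already been assembled in the preceding sections. I would address the conditions roughly in the order (1), (2), (3), (5), (6), (4), (7), (8), (9), handling the easy formal ones first and saving the moment-boundedness conditions for last.

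First I would deal with condition (1): although $\Omega$ is assumed compact in (H2$^\prime$) rather than open, I would invoke the remark that \ctn{Hoadley71}'s proof goes through for compact $\Omega$ by the standard finite-subcover argument (the same observation the authors use). Condition (2) is exactly the content of Theorem \ref{theorem:consistency_non_iid}, which requires only that $\hat\theta_n\stackrel{P}{\to}\theta_0$. Conditions (3) and (5)--(6) are immediate from the explicit form (\ref{eq:likelihood3}) of $\lambda_i(X_i,\theta)$ and the formulas (\ref{eq:diff1})--(\ref{eq:diff2}) for the second partial derivatives together with the justification, cited from \ctn{Maud12}, that differentiation can be passed under the integral sign; this gives existence of $\zeta'_i$, $\zeta''_i$ a.s., the score identity $E_\theta[\zeta'_i(X_i,\theta)]=0$, and the usual information equality. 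Condition (4) (uniform continuity of $\zeta''_i(X_i,\theta)$ in $\theta$, uniformly in $i$) can be handled exactly as was condition (2) in Section \ref{subsec:consistency_non_iid}: one writes the modulus of continuity of $\zeta''_i$ as a continuous function $\delta_{x^i,T_i}(\epsilon)$ of the initial value $x^i$ and horizon $T_i$, and uses compactness of $\mathfrak X$ and $\mathfrak T$ to get a positive infimum $\delta(\epsilon)$ independent of $i$. Measurability of $\zeta''_i$ in $X_i$ is clear from continuity.

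Condition (7) is precisely the content of the limit (\ref{eq:fisher_limit_1}), together with the hypothesis (H4$^\prime$) carried over to the limiting Fisher information $\mathcal{I}(\theta_0)=\mathcal{I}_{x^\infty,T^\infty}(\theta_0)$; strictly I would note that (H4$^\prime$) needs to be imposed on the limit, which is natural since $\mathcal{I}_{x^k,T_k}\to\mathcal{I}(\theta_0)$ in our setup. For condition (8), the score components $\zeta'_{i,j}(X_i,\theta_0)$ are, by the expressions derived from (\ref{eq:likelihood3}), polynomial in $\mu_0$, $I_i(\omega_0^2)$, and $U_i/(1+\omega_0^2V_i)$; since $0<I_i(\omega_0^2)<1/\omega_0^2$ and, by Lemma 1 of \ctn{Maud12} combined with Corollary \ref{corollary:corollary1}, the random variable $U(x,T)/(1+\omega_0^2 V(x,T))$ has moments of all orders that are continuous in $(x,T)$, compactness of $\mathfrak X$ and $\mathfrak T$ yields a uniform third-moment bound $K_2$.

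The main obstacle, and the condition that deserves the most care, is (9). Here I would exploit the explicit expressions (\ref{eq:diff1})--(\ref{eq:diff2}) for the second derivatives, writing
\[
|\zeta''_{i,jk}(X_i,\xi)|\leq P_{jk}\!\left(|\gamma_i(\xi)|,\,I_i(\omega^2)\right)
\]
for some polynomial $P_{jk}$, and then bound $|\gamma_i(\xi)|$ uniformly over $\xi$ in a small neighborhood of $\theta_0$ using the estimate (\ref{eq:cond1_3}) from \ctn{Maud12}, together with $0<I_i(\omega^2)<1/\underline\omega^2$ on $\Omega$. This produces a dominating random variable $B_{i,jk}(X_i)$ that is polynomial in $|U_i/(1+\omega_0^2 V_i)|$ (plus deterministic constants depending on $\bar\mu$, $\underline\omega^2$, $\omega_0^2$). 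Taking $1+\delta$ sufficiently small so that the resulting moment exists (any $\delta>0$ works here because all polynomial moments of $U_i/(1+\omega_0^2 V_i)$ are finite and, by Theorem \ref{theorem:moment_continuity}, continuous in $(x^i,T_i)$), and then using compactness of $\mathfrak X$ and $\mathfrak T$ to take the supremum in $i$, gives the required uniform bound $K_2$. Having verified all nine conditions, the Hoadley conclusion yields (\ref{eq:MLE_normality_non_iid}).
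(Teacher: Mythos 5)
Your proposal is correct and follows essentially the same route as the paper: both verify Hoadley's nine conditions using the compact-$\Omega$ finite-subcover remark for (1), Theorem \ref{theorem:consistency_non_iid} for (2), the uniform-continuity-via-compactness argument for (4), the limit (\ref{eq:fisher_limit_1}) for (7), and the moment bounds on $U_i/(1+\omega_0^2V_i)$ together with (\ref{eq:cond1_3}) and compactness of $\mathfrak X$ and $\mathfrak T$ for (8) and (9). Your explicit remark that positive-definiteness in condition (7) requires (H4$^\prime$) to be read as a condition on the limiting information $\mathcal I_{x^\infty,T^\infty}(\theta_0)$ is a small but worthwhile clarification that the paper leaves implicit.
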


\section{Summary and conclusion}
\label{sec:conclusion}

In $SDE$ based random effects model framework, \ctn{Maud12} considered 
the linearity assumption in the drift function given by $b(x,\phi_i) = \phi_ib(x)$, where $\phi_i$ are supposed to 
be Gaussian random variables with mean $\mu$ and variance $\omega^2$, and obtained a closed form 
expression of the likelihood of the above parameters. Assuming the $iid$ set-up, they proved convergence in 
probability and asymptotic normality of the maximum likelihood estimator of the parameters. 
In this paper, we proved strong consistency, rather than weak consistency, and asymptotic normality of the maximum
likelihood estimator under weaker assumptions in the $iid$ set-up. 
Moreover, we extended the model of \ctn{Maud12} to the independent, but non-identical set-up, proving
weak consistency and asymptotic normality.

In \ctn{Maitra14b}, we extended our classical asymptotic theory to the Bayesian framework, for both
$iid$ and non-$iid$ situations. Specifically, we proved posterior consistency and asymptotic posterior
normality, for both $iid$ and non-$iid$ set-ups. There we have also illustrated our theoretical development
with several examples and simulation studies. It is to be noted that those examples, illustrating 
consistency and inconsistency of the associated Bayes estimators, remains valid in the classical paradigm
with the Bayes estimators replaced by the maximum likelihood estimators.

\section*{Acknowledgments}
Sincere gratitude goes to the reviewer whose suggestions have led to
much improved presentation of our article.
The first author gratefully acknowledges her CSIR Fellowship, Govt. of India.

\section*{Appendix}


\begin{proof}[Proof of Theorem \ref{theorem:moment_continuity}]
We can decompose (\ref{eq:u_x_T}) as
\begin{align}
 U(x,T)&=\int_0^T\frac{b(X(s,x))}{\sigma^2(X(s,x))}\phi(x)b(X(s,x))ds\notag\\
&\quad + \int_0^T\frac{b(X(s,x))}{\sigma^2(X(s,x))}(d X(s,x)-\phi(x)b(X(s,x))ds)\notag\\
&=\phi(x)\int_0^T\frac{b^2(X(s,x))}{\sigma^2(X(s,x))}ds
\label{eq:U1}\\
&\quad +\int_0^T\frac{b(X(s,x))}{\sigma(X(s,x))}dW(s)
\label{eq:U2}\\
&=\phi(x)U^{(1)}(x,T)+U^{(2)}(x,T),\quad\mbox{(say)},
\label{eq:U1U2}
\end{align}
where $W(s)$ is the standard Weiner process defined on $[0,T]$. 

Given the process $X(\cdot,\cdot)$, continuity of (\ref{eq:U1}) with respect to $x$ and $T$ 
can be seen as follows. Let $T_1,T_2\in \mathfrak T$; without loss of generality, let $T_2>T_1$. Also, let
$x_1,x_2\in \mathfrak X$. Then,
\begin{align}
&\left|U^{(1)}(x_1,T_1)-U^{(1)}(x_2,T_2)\right|\notag\\
&=\left|\int_0^{T_1}\frac{b^2(X(s,x_1))}{\sigma^2(X(s,x_1))}ds
-\int_0^{T_2}\frac{b^2(X(s,x_2))}{\sigma^2(X(s,x_2))}ds\right|\notag\\
&=\left|\int_0^{T_1}\left[\frac{b^2(X(s,x_1))}{\sigma^2(X(s,x_1))}
-\frac{b^2(X(s,x_2))}{\sigma^2(X(s,x_2))}\right]ds\right.\notag\\
&\quad\quad\quad\quad\left.-\int_{T_1}^{T_2}\frac{b^2(X(s,x_2))}{\sigma^2(X(s,x_2))}ds\right|\notag\\
&\leq\int_0^{T_1}\left|\frac{b^2(X(s,x_1))}{\sigma^2(X(s,x_1))}
-\frac{b^2(X(s,x_2))}{\sigma^2(X(s,x_2))}\right|ds\notag\\
&\quad\quad\quad\quad+\int_{T_1}^{T_2}\left|\frac{b^2(X(s,x_2))}{\sigma^2(X(s,x_2))}\right|ds\notag\\
&\leq T_1\underset{s\in [0,T_1]}{\sup}~\left|\frac{b^2(X(s,x_1))}{\sigma^2(X(s,x_1))}
-\frac{b^2(X(s,x_2))}{\sigma^2(X(s,x_2))}\right|\label{eq:U1_cont1}\\
&\quad\quad\quad\quad+|T_2-T_1|\underset{s\in [T_1,T_2],x\in \mathfrak X}{\sup}~\left|\frac{b^2(X(s,x))}{\sigma^2(X(s,x))}\right|
\label{eq:U2_cont1}\\
&\leq T_{\max}\left|\frac{b^2(X(s^{*},x_1))}{\sigma^2(X(s^{*},x_1))}
-\frac{b^2(X(s^{*},x_2))}{\sigma^2(X(s^{*},x_2))}\right|
+C_2|T_2-T_1|,
\label{eq:U2_cont2}
\end{align}
where $T_{\max}=\sup~\mathfrak T$; $s^{*}\in [0,T_1]$ is such that the supremum in (\ref{eq:U1_cont1}) is
attained. That there exists such $s^{*}$ is clear due to continuity of the functions in $s$ and 
compactness of the interval. In (\ref{eq:U2_cont2}), $C_2$ is the upper bound for the function 
$\left|\frac{b^2(X(s,x))}{\sigma^2(X(s,x))}\right|$.

Since $X(\cdot,x)$ is continuous in $x$, due to continuity of $b(\cdot)$ and $\sigma(\cdot)$, for any $\epsilon>0$, 
one can choose $\delta_1(\epsilon)>0$ such that $|x_1-x_2|<\delta_1(\epsilon)$ implies 
$$\left|\frac{b^2(X(s^{*},x_1))}{\sigma^2(X(s^{*},x_1))}
-\frac{b^2(X(s^{*},x_2))}{\sigma^2(X(s^{*},x_2))}\right|<\frac{\epsilon}{2T_{\max}},$$
so that the first term in (\ref{eq:U2_cont2}) is less than $\epsilon/2$.
Choosing $\delta_2(\epsilon)=\frac{\epsilon}{2C_2}$ yields that if $|T_2-T_1|<\delta_2(\epsilon)$, then
the second term in (\ref{eq:U2_cont2}) is less than $\epsilon/2$.
This shows continuity of $U^{(1)}(x,T)$ with respect to $x$ and $T$ for given $X(\cdot,\cdot)$.
It follows that, for sequences $\{x_m\}_{m=1}^{\infty}$, $\{T_m\}_{m=1}^{\infty}$ such that $x_m\rightarrow \tilde x$ and
$T_m\rightarrow\tilde T$ as $m\rightarrow\infty$, 
\begin{equation}
U^{(1)}(x_m,T_m)\stackrel{\mathcal L}{\rightarrow}U^{(1)}(\tilde x,\tilde T).
\label{eq:U1_conv1}
\end{equation}
It is also clear that 
\begin{equation}
\phi(x_m)\stackrel{\mathcal L}{\rightarrow}\phi(\tilde x).
\label{eq:phi_conv1}
\end{equation}
Now note that due to assumptions (H5$^\prime$), 
(H6$^\prime$) 
(observing that
$U^{(1)}(x,T)=V(x,T)$ for all $x\in\mathfrak X$ and $T\in\mathfrak T$),
and compactness of $\mathfrak X$ and $\mathfrak T$, 
we have, 
for any $k\geq 1$, 
\begin{align}
\underset{m\geq 1}{\sup}~E\left[\phi(x_m)U^{(1)}(x_m,T_m)\right]^{2k}&<\infty,
%
\label{eq:ui1}
\end{align}
for all $m\geq 1$, ensuring requisite uniform integrability. 
%
Hence, it follows that
\begin{align}
E\left[\phi(x_m)U^{(1)}(x_m,T_m)-\phi(\tilde x)U^{(1)}(\tilde x,\tilde T)\right]^2\rightarrow 0.
\label{eq:ui2}
\end{align}

Let us now deal with $U^{(2)}(x,T)$ given by (\ref{eq:U2}).
Letting for any set $A$, $\delta_A(s)=1$ if $s\in A$ and $0$ otherwise, be the indicator function,
we define
\begin{align}
Q(x_m,T_m)&=\int_0^{T_{\max}}\left[\frac{b(X(s,x_m))}{\sigma(X(s,x_m))}\delta_{[0,T_m]}(s)-
\frac{b(X(s,\tilde x))}{\sigma(X(s,\tilde x))}\delta_{[0,\tilde T]}(s)\right]^2ds\notag\\
&=\int_0^{T_{\max}}\frac{b^2(X(s,x_m))}{\sigma^2(X(s,x_m))}\delta_{[0,T_m]}(s)ds+
\int_0^{T_{\max}}\frac{b^2(X(s,\tilde x))}{\sigma^2(X(s,\tilde x))}\delta_{[0,\tilde T]}(s)ds\notag\\
&\quad\quad-2\int_0^{T_{\max}}\frac{b(X(s,x_m))}{\sigma(X(s,x_m))}
\frac{b(X(s,\tilde x))}{\sigma(X(s,\tilde x))}\delta_{[0,\min\{T_m,\tilde T\}]}(s)ds\notag\\
&=\int_0^{T_{m}}\frac{b^2(X(s,x_m))}{\sigma^2(X(s,x_m))}ds
+\int_0^{\tilde T}\frac{b^2(X(s,\tilde x))}{\sigma^2(X(s,\tilde x))}ds\notag\\
&\quad\quad-2\int_0^{\min\{T_m,\tilde T\}}\frac{b(X(s,x_m))}{\sigma(X(s,x_m))}
\frac{b(X(s,\tilde x))}{\sigma(X(s,\tilde x))}ds.
\label{eq:Q1}
\end{align}

It follows in the same way as in the proof of continuity of $U^{(1)}(\cdot,\cdot)$ that 
the first and the third integrals in (\ref{eq:Q1}) associated with the function $Q(\cdot,\cdot)$, are 
continuous at $(\tilde x,\tilde T)$. As a result, for given $X(\cdot,\cdot)$,
$Q(x_m,T_m)\rightarrow 0$ as $m\rightarrow\infty$. It follows that
$Q(x_m,T_m)\stackrel{\mathcal L}{\rightarrow} 0$. 

Now note that 
$$Q(x_m,T_m)\leq 2\left[\left(\int_0^{T_{\max}}\frac{b^2(X(s,x_m))}{\sigma^2(X(s,x_m))}ds\right)^2
+\left(\int_0^{T_{\max}}\frac{b^2(X(s,\tilde x))}{\sigma^2(X(s,\tilde x))}ds\right)^2\right],$$
so that, for any $k\geq 2$,
\begin{align}
E\left[Q(x_m,T_m)\right]^k&\leq 
2^{2k}E\left[\left(\int_0^{T_{\max}}\frac{b^2(X(s,x_m))}{\sigma^2(X(s,x_m))}ds\right)^{2k}
+\left(\int_0^{T_{\max}}\frac{b^2(X(s,\tilde x))}{\sigma^2(X(s,\tilde x))}ds\right)^{2k}\right].
\label{eq:Q3}
\end{align}
Since, by assumption (H6$^\prime$) 
moments of all orders of
$V(x,T)$ are finite,
for any $x\in\mathfrak X$ and $T\in\mathfrak T$, and since $\mathfrak X$ and $\mathfrak T$ are compact,
it follows that 
\[
\underset{m\geq 1}{\sup}~ E\left[Q(x_m,T_m)\right]^k<\infty,
\]
guaranteeing uniform integrabiility.
Hence,
\begin{equation}
E\left[Q(x_m,T_m)\right]\rightarrow 0,\quad\mbox{as}~m\rightarrow\infty.
\end{equation}
By It\^{o} isometry (see, for example, \ctn{Oksendal03}) it then follows that
\begin{equation}
E\left[\int_0^{T_{\max}}\frac{b(X(s,x_m))}{\sigma(X(s,x_m))}\delta_{[0,T_m]}(s)dW(s)
-\int_0^{T_{\max}}\frac{b(X(s,\tilde x))}{\sigma(X(s,\tilde x))}\delta_{[0,\tilde T]}(s)dW(s)\right]^2
\rightarrow 0.
\label{eq:isometry1}
\end{equation}
That is,
\begin{equation}
E\left[\int_0^{T_m}\frac{b(X(s,x_m))}{\sigma(X(s,x_m))}dW(s)
-\int_0^{\tilde T}\frac{b(X(s,\tilde x))}{\sigma(X(s,\tilde x))}dW(s)\right]^2
\rightarrow 0.
\label{eq:isometry2}
\end{equation}
It follows that
\begin{equation}
U^{(2)}(x_m,T_m)\stackrel{\mathcal L}{\rightarrow}U^{(2)}(\tilde x,\tilde T).
\label{eq:U2_conv1}
\end{equation}
Using the Burkholder-Davis-Gundy inequality (see, for example, \ctn{Maud12}) we obtain
\begin{align}
E\left[U^{(2)}(x_m,T_m)\right]^{2k}&\leq C_kE\left[\int_0^{T_m}\frac{b^2(X(s,x_m))}{\sigma^2(X(s,x_m))}ds\right]^k.
\label{eq:ui5}
\end{align}
Again, due to assumption (H6$^\prime$) 
and compactness of $\mathfrak X$ and $\mathfrak T$ it follows that
$\underset{m\geq 1}{\sup}~E\left[U^{(2)}(x_m,T_m)\right]^{2k}<\infty$, so that uniform integrability is assured.
It follows that
\begin{align}
E\left[U^{(2)}(x_m,T_m)-U^{(2)}(\tilde x,\tilde T)\right]^2\rightarrow 0.
\label{eq:ui3}
\end{align}

From (\ref{eq:ui2}) and (\ref{eq:ui3}) it follows, using the Cauchy-Schwartz inequality, that 
\begin{align}
&E\left[U(x_m,T_m)-U(\tilde x,\tilde T)\right]^2\notag\\
&\leq E\left[\phi(x_m)U^{(1)}(x_m,T_m)-\phi(\tilde x)U^{(1)}(\tilde x,\tilde T)\right]^2
+E\left[U^{(2)}(x_m,T_m)-U^{(2)}(\tilde x,\tilde T)\right]^2\notag\\
&\quad\quad+2\sqrt{E\left[\phi(x_m)U^{(1)}(x_m,T_m)-\phi(\tilde x)U^{(1)}(\tilde x,\tilde T)\right]^2
E\left[U^{(2)}(x_m,T_m)-U^{(2)}(\tilde x,\tilde T)\right]^2}\notag\\
&\rightarrow 0.
\label{eq:phi_U3}
\end{align}
Since $V(x,T)=U^{(1)}(x,T)$, due to 
(\ref{eq:U1_conv1}) and assumption (H6$^\prime$) 
(the latter ensuring uniform integrability), it easily follows
that
\begin{align}
&E\left[V(x_m,T_m)-V(\tilde x,\tilde T)\right]^2
\rightarrow 0.
\label{eq:phi_V}
\end{align}
Let $G(x,T)=\left(U(x,T), V(x,T)\right)$. Then it follows from
(\ref{eq:phi_U3}) and (\ref{eq:phi_V}), that 
\begin{equation}
G(x_m,T_m)\stackrel{\mathcal L}{\rightarrow}G(\tilde x,\tilde T).
\label{eq:G1}
\end{equation}
That is,
\begin{equation}
\left(U(x_m,T_m), V(x_m,T_m)\right)\stackrel{\mathcal L}{\rightarrow}\left(U(\tilde x,\tilde T), V(\tilde x,\tilde T)\right).
\label{eq:UV}
\end{equation}
Consider also a sequence $\left\{\theta_m\right\}_{m=1}^{\infty}$ in $\Omega$, converging to $\tilde\theta\in\Omega$.
Then, for any function $h(u,v,\theta)$, which is continuous in $u$, $v$ and $\theta$, and such that 
the sequence $\left\{h(U(x_m,T_m),V(x_m,T_m),\theta_m)\right\}_{m=1}^{\infty}$ is uniformly integrable, we must have 
\begin{equation}
E\left[h(U(x_m,T_m),V(x_m,T_m),\theta_m)\right]\rightarrow E\left[h(U(\tilde x,\tilde T),V(\tilde x,\tilde T),\tilde\theta)\right],
\label{eq:h_H}
\end{equation}
ensuring continuity of $ E\left[h(U(x,T),V(x,T),\theta)\right]$ with respect to $x$, $T$ and $\theta$.
\end{proof}



\normalsize
\bibliographystyle{natbib}
\bibliography{irmcmc}

\begin{thebibliography}{}

\bibitem[Delattre {\em et~al.}(2013)Delattre, Genon-Catalot, and
  Samson]{Maud12}
Delattre, M., Genon-Catalot, V., and Samson, A. (2013).
\newblock {M}aximum {L}ikelihood {E}stimation for {S}tochastic {D}ifferential
  {E}quations with {R}andom {E}ffects.
\newblock {\em Scandinavian Journal of Statistics\/}, {\bf 40}, 322--343.

\bibitem[Hoadley(1971)Hoadley]{Hoadley71}
Hoadley, B. (1971).
\newblock {A}symptotic {P}roperties of {M}aximum {L}ikelihood {E}stimators for
  the {I}ndependent not {I}dentically {D}istributed {C}ase.
\newblock {\em The Annals of Mathematical Statistics\/}, {\bf 42}, 1977--1991.

\bibitem[Maitra and Bhattacharya(2015)Maitra and Bhattacharya]{Maitra14b}
Maitra, T. and Bhattacharya, S. (2015).
\newblock {O}n {B}ayesian {A}symptotics in {S}tochastic {D}ifferential
  {E}quations with {R}andom {E}ffects.
\newblock {\em Statistics and Probability Letters\/}, {\bf 103}, 148--159.
\newblock Also available at ``http://arxiv.org/abs/1407.3971".

\bibitem[{\O}ksendal(2003){\O}ksendal]{Oksendal03}
{\O}ksendal, B. (2003).
\newblock {\em {S}tochastic {D}ifferential {E}quations\/}.
\newblock Springer-Verlag, New York.

\bibitem[Schervish(1995)Schervish]{Schervish95}
Schervish, M.~J. (1995).
\newblock {\em {T}heory of {S}tatistics\/}.
\newblock Springer-Verlag, New York.

\end{thebibliography}

\end{document}